\documentclass[10pt,leqno]{amsart}
\usepackage{graphicx}
\usepackage{pdfpages}
\usepackage{indentfirst,csquotes}
\usepackage{amsmath,amssymb,amsfonts}
\usepackage{amsthm}
\usepackage{mathrsfs}

\usepackage{graphicx}
\usepackage{booktabs}
\usepackage{multirow}
\usepackage[numbers]{natbib}
\usepackage{algorithm}
\usepackage{algorithmicx}
\usepackage{algpseudocode}
\usepackage{listings}
\usepackage{latexsym}

\usepackage{xcolor}

\usepackage{tikz}
\usepackage{pgfplots}
\pgfplotsset{compat=1.18}
\usetikzlibrary{patterns}

\usepackage[title]{appendix}

\usepackage[numbers]{natbib}
\newcommand{\JZ}[1]{{\color{violet} #1}}

\newtheorem{theorem}{Theorem}

\newtheorem{lemma}[theorem]{Lemma}
\newtheorem{corollary}[theorem]{Corollary}
\theoremstyle{definition}
\newtheorem{definition}{Definition}

\usepackage{amssymb,amsthm,amsmath}
\usepackage{xcolor,paralist,hyperref,titlesec,fancyhdr,etoolbox}

\titleformat{\section}[display]{\normalfont\huge\bfseries\centering}{\centering\chaptertitlename\thechapter}{10pt}{\Large}
\titlespacing*{\section}{0pt}{0ex}{0ex}

\hypersetup{ colorlinks=true, linkcolor=black, filecolor=black, urlcolor=black }

\usepackage{lipsum}

\begin{document}
\title{Upper bounds for double Roman domination and  $[k]$-Roman domination of cylindrical graphs  $C_m \Box P_n$} 
\author[Brezovnik]{Simon Brezovnik$^{(1)}$}
\date{\today}
\address{$^{(1)}$Institute of Mathematics, Physics and Mechanics, Ljubljana, Slovenia}
\email{simon.brezovnik@fs.uni-lj.si}
\author[Žerovnik]{Janez Žerovnik$^{(2)}$}
\address{$^{(1,2)}$Faculty of Mechanical Engineering, University of Ljubljana, Ljubljana, Slovenia}
\address{$^{(2)}$Rudolfovo -- Science and Technology Centre, Novo Mesto, Slovenia}
\email{janez.zerovnik@fs.uni-lj.si}
\maketitle

\let\thefootnote\relax
\footnotetext{MSC2020: Primary 00A05, Secondary 00A66.} 

\begin{abstract}
Roman-type domination parameters form an important class of graph
invariants that model protection and resource allocation problems on
networks. Among them, $[k]$-Roman domination provides a unified framework
that generalizes Roman, double Roman, and higher-order variants.
In this paper we investigate the $[k]$-Roman domination number of
cylindrical grids $C_m\Box P_n$ and derive several new constructive upper
bounds.
Our approach combines three complementary techniques: linear periodic
constructions, uniform ceiling-type labelings, and packing-based
refinements. We first analyze the case $C_9\Box P_n$, where these three
families of bounds can be compared explicitly and their relative
efficiency is shown to depend on the parameter $k$. We then extend the
linear constructions to cylindrical grids whose circumference is a
multiple of one of the values 
{$ r \in \{3,4,5, \dots, 9\}$,}
 obtaining a unified family of
upper bounds for $C_{rt}\Box P_n$. Motivated by the asymptotic behavior of
these estimates, we further derive general upper bounds depending only on
the residue class of $m$ modulo $5$, which apply to all cylindrical grids.
As a consequence, we obtain explicit estimates for the double Roman
domination number $\gamma_{[2]R}(C_m\Box P_n)$ and compare the resulting
multiple-based constructions with the residue-class bounds. This
comparison shows that the residue-class construction becomes
asymptotically superior for all sufficiently large admissible
circumferences, while several exceptional small cases remain better
covered by tailored constructions.
\end{abstract} 
\smallskip

\small{\noindent
\textbf{Keywords:} $[k]$-Roman domination; double Roman domination; cylindrical grids; Cartesian product of graphs
\smallskip

\noindent
\textbf{MSC (2020):} 05C69; 05C76}

\newpage
\textbf{\Large{Introduction}}
\smallskip

Domination-type parameters form one of the central topics of modern graph theory and arise naturally in applications such as facility location, monitoring of communication networks, and deployment of emergency or defense resources \cite{Haynes,GareyJohnson}. Among these parameters, Roman domination has received considerable attention since its introduction by Cockayne et al.~\cite{Cockayne2004}. A Roman dominating function assigns labels from $\{0,1,2\}$ to vertices so that each vertex labeled $0$ is protected by a neighboring vertex labeled $2$, and the minimum total weight of such a labeling defines the Roman domination number $\gamma_R(G)$.

Several stronger variants of Roman domination have been introduced in order to model more robust protection mechanisms. These include double Roman domination 
\cite{Beeler2016,math11020351},
 triple Roman domination \cite{ABDOLLAHZADEHAHANGAR2021125444}, and related higher-order extensions. A unified framework covering these variants was proposed by Abdollahzadeh Ahangar et al.~\cite{ABDOLLAHZADEHAHANGAR2021125444} under the name $[k]$-Roman domination. In this setting, vertices receive labels from $\{0,1,\dots,k+1\}$ subject to local neighborhood constraints depending on both the assigned weight and the number of positively labeled neighbors. The corresponding minimum total weight defines the $[k]$-Roman domination number $\gamma_{[k]R}(G)$.

Roman-type domination parameters have been investigated on numerous graph classes, including trees, cycles, grids, and Cartesian product graphs (see, for example, \cite{Chellali2021VarietiesI,Chellali2020VarietiesII}). 

Cartesian products of paths and cycles play a particularly important role in domination theory, since their regular structure supports periodic constructions while still exhibiting nontrivial combinatorial behavior \cite{ImrichKlavzar,Chang1992,Alanko}. Among such graphs, cylindrical grids $C_m \Box P_n$ form a natural intermediate class between cycles, rectangular grids, and toroidal graphs.

Recent work on $[k]$-Roman domination has focused on determining bounds and exact values for standard graph families as well as structural properties of optimal labelings \cite{Ahangar2021,Khalili2023}. In particular, cylindrical grids provide a suitable setting for studying constructive techniques based on periodic labelings and packing-type arguments. More recently, \([k]\)-Roman domination has been analyzed on small families of cylindrical grids, revealing structural connections with efficient domination.
{In   \cite{BrezovnikZerovnik2026_1},  
the connection between \([k]\)-Roman domination and efficient domination is used to obtain
explicit periodic \([k]\)-Roman dominating functions that yield constructive upper bounds for     $C_3 \Box P_n$,   and   $C_4 \Box P_n$.
Quality of various  upper bounds is discussed depending on $k$.
The  analysis is extended to   $C_m \Box P_n$ for  $m \in \{5,6, \dots, 9\}$ in  \cite{BrezovnikZerovnikkRomanCylindrical2}.
}

In this paper, we continue this line of research by extending the analysis to general class of cylindrical graphs. Our approach combines three complementary techniques: periodic linear labeling along the cycle direction, uniform ceiling-type constructions based on neighborhood size, and packing-based refinements exploiting disjoint closed neighborhoods.

We first consider the case $C_9 \Box P_n$, where several competing
constructions yield different upper bounds depending on the parameter
$k$. A comparison of these bounds shows that linear, uniform, and
packing-based constructions become optimal in different parameter
regimes. We then extend the linear constructions obtaining general bounds for graphs
of the form $C_{rt} \Box P_n$, where
{$ r \in \{3,4, \dots, 9\}$,} 

Motivated by the observation that decompositions into blocks of length
five yield the smallest asymptotic slopes with respect to $n$ for
$k\ge2$, we then derive a unified family of upper bounds depending on
the residue class of $m$ modulo $5$. This leads to explicit estimates
for $\gamma_{[k]R}(C_m\Box P_n)$ valid for arbitrary cylindrical grids.
In the special case $k=2$, these results yield new upper bounds for the
double Roman domination number, together with a comparison between the
multiple-based constructions and the general residue-class construction.

The paper is organized as follows. In Section~2 we introduce notation
and basic definitions. Section~3 presents three types of upper bounds
for $C_9\Box P_n$ and compares their efficiency for different values of
$k$. Section~4 extends the linear constructions to multiples of
{$ r \in \{3,4, \dots, 9\}$,} 
and develops general modulo-$5$ upper bounds for arbitrary
cylindrical grids. Section~5 specializes these results to the double
Roman domination number and compares the resulting estimates.
Concluding remarks and directions for future research are given in the
final section.

\smallskip
\textbf{\Large{Preliminaries}}
\smallskip

Throughout the paper we consider {finite simple graphs} $G=(V(G),E(G))$, where $V(G)$ denotes the vertex set and $E(G)$ denotes the edge set of $G$. For a vertex $v\in V(G)$, the \emph{open neighborhood} of $v$ is defined as
$N(v)=\{u\in V(G)\mid uv\in E(G)\},$
while the \emph{closed neighborhood} of $v$ is
$N[v]=N(v)\cup\{v\}.$

For a subset $S\subseteq V(G)$ we write $N[S]=\bigcup_{v\in S}N[v].$

Let $f:V(G)\to\mathbb{N}_0$ be a function. For any subset $S\subseteq V(G)$ we define
$f(S)=\sum_{v\in S} f(v),$
and in particular the \emph{weight} of $f$ is given by
\[
\omega(f)=f(V(G))=\sum_{v\in V(G)}f(v).
\]

We next recall the definition of a \emph{$[k]$-Roman dominating function}. Let $k\in\mathbb{N}$. A function
\[
f:V(G)\to\{0,1,\dots,k+1\}
\]
is called a \emph{$[k]$-Roman dominating function} if for every vertex $v\in V(G)$ with $f(v)<k$ we have
\[
f(N[v])\ge k+|AN(v)|,
\]
where 
{ {\em active neighborhood}  $AN(v)$ of vertex $v$ is defined as}
\[
AN(v)=\{u\in N(v)\mid f(u)>0\}.
\]
The minimum possible weight of such a function is called the \emph{$[k]$-Roman domination number} of $G$ and is denoted by
$\gamma_{[k]R}(G).$

The parameter $\gamma_{[k]R}(G)$ generalizes several well-known domination-type invariants. In particular, for $k=1$ it coincides with the classical Roman domination number, while for $k=2$ it coincides with the double Roman domination number.

Since the graphs studied in this paper belong to the class of cylindrical grids, we briefly recall the definition of the Cartesian product. For graphs $G$ and $H$, \textit{Cartesian product} $G\Box H$ has vertex set $V(G)\times V(H)$, where vertices $(g_1,h_1)$ and $(g_2,h_2)$ are adjacent whenever either $g_1=g_2$ and $h_1h_2\in E(H)$, or $h_1=h_2$ and $g_1g_2\in E(G)$.

Let $C_m$ denote the cycle on $m$ vertices and $P_n$ the path on $n$ vertices. The Cartesian product $C_m\Box P_n$
is called a \emph{cylindrical grid}. Throughout the paper we interpret $C_m$ as the cyclic direction and $P_n$ as the longitudinal direction of the cylinder. Vertices of $C_m\Box P_n$ will be denoted by $(i,j)$, where
$i\in\{0,\dots,m-1\}, j\in\{0,\dots,n-1\}.$ For each $j$, the subgraph induced by
$F_j=\{(i,j)\mid 0\le i\le m-1\}
$
is called the $j$-th fibre of $C_m \square P_n$. We identify each fibre with its vertex set whenever convenient.

Periodic constructions along the cyclic direction play a central role in the analysis of $[k]$-Roman domination on cylindrical grids. In particular, decompositions into blocks of length five will serve as the main building blocks in several later constructions.

We now describe an explicit periodic construction for the base case $m=5$, which was introduced in~\cite{BrezovnikZerovnikkRomanCylindrical2}. This construction will serve as a reference configuration in Section~5, where it will be used to derive general upper bounds for $\gamma_{[k]R}(C_m\Box P_n)$ for arbitrary cylindrical grids and to obtain corresponding consequences for double Roman domination.

Consider the following periodic labeling pattern:


 {   
\addtocounter{MaxMatrixCols}{2}

\begin{equation}
\begin{pmatrix}
  \cdots &  0 & \mid & k+1 & 0 & 0 & 0 & 0 & \mid & k+1 & \cdots \\
  \cdots &  0 & \mid & 0 & 0 & 0 & k+1 & 0 & \mid & 0 & \cdots \\
  \cdots &  0 & \mid & 0 & k+1 & 0 & 0 & 0 & \mid & 0 & \cdots \\
  \cdots &  k+1 & \mid & 0 & 0 & 0 & 0 & k+1 & \mid & 0 & \cdots \\
  \cdots &  0 & \mid & 0 & 0 & k+1 & 0 & 0 & \mid & 0 & \cdots
\end{pmatrix}.
\label{matrika5a}
\end{equation}
}

The entry in row $i$ and column $j$ of Pattern~(\ref{matrika5a}) represents the value of $f(i,j)$. The pattern repeats with period five along the $P_n$ direction and therefore defines a labeling of the entire graph $C_5\Box P_n$. It is straightforward to verify that every vertex with label smaller than $k$ satisfies the defining condition of a $[k]$-Roman dominating function with respect to this assignment. Consequently, Pattern~(\ref{matrika5b}), together with a suitable correction at the end vertices of the path $P_8$, determines a valid $[k]$-Roman dominating function on $C_5\Box P_8$ with two patches in the first and last fibre
(added weights $k$ are in bold).

\begin{equation}
\begin{pmatrix}
0 & \mid & k+1 & 0 & 0 & 0 & 0 & \mid & k+1 & 0   \\
{\bf k} & \mid & 0 & 0 & 0 & k+1 & 0 & \mid & 0 & 0   \\
0 & \mid & 0 & k+1 & 0 & 0 & 0 & \mid & 0 &k+1    \\
k+1 & \mid & 0 & 0 & 0 & 0 & k+1 & \mid & 0 & 0  \\
0 & \mid & 0 & 0 & k+1 & 0 & 0 & \mid & 0 & {\bf k}    
\end{pmatrix}.
\label{matrika5b}
\end{equation}

Finally, we recall a simple extension principle that allows periodic constructions on smaller cylindrical grids to be lifted to larger ones. Since cylindrical graphs are invariant under cyclic shifts along the $C_m$ direction, many constructions of $[k]$-Roman dominating functions can be obtained by repeating a fixed pattern.

More precisely, let $r$ be a divisor of $m$ and suppose that a $[k]$-Roman dominating function is defined on $C_r\Box P_n$. By repeating this labeling periodically along the cycle direction, we obtain a $[k]$-Roman dominating function on $C_m\Box P_n$, where $m=rt$ and $t\ge1$. The weight of the resulting labeling is $t$ times the weight of the original one. Consequently,
\[
\gamma_{[k]R}(C_{rt}\Box P_n)\le t\,\gamma_{[k]R}(C_r\Box P_n).
\]

This observation will be used repeatedly in the sequel to extend constructions from small base circumferences to their multiples.

\smallskip
\textbf{\Large{Upper bounds for $m=9$}}
\smallskip

\noindent
We now derive an upper bound for
$\gamma_{[k]R}(C_9 \Box P_n)$ by constructing
a suitable $[k]$-Roman dominating function.
The resulting estimate is given below.
\begin{theorem}\label{thm:m9}
Let $n\ge 2$. Then
\begin{equation}\label{eq:gamma_kR_C9_Pn}
\gamma_{[k]R}(C_{9}\Box P_n) \le 2n(k+1) + 2k.
\end{equation}
\end{theorem}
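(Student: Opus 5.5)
The plan is to exhibit an explicit labeling using only the values $0$, $k$ and $k+1$: two vertices labelled $k+1$ in every fibre, plus one vertex labelled $k$ in each of the two end fibres. This gives weight exactly $2n(k+1)+2k$.

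\textbf{Reduction to domination.} First I reduce the $[k]$-Roman condition to ordinary domination. Vertices with label $k$ or $k+1$ impose no condition, since the condition is only required when $f(v)<k$. Every other vertex will have label $0$. For such a vertex with at least one neighbor labelled $k+1$, say $a\ge 1$ such neighbors and no other active neighbors, we get $f(N[v])\ge a(k+1)\ge k+a=k+|AN(v)|$. In the construction, neighbors labelled $k$ can occur only next to the patch vertices. Hence it suffices that the set $S$ of vertices labelled $k+1$ dominates every vertex labelled $0$. The patch vertices labelled $k$ are placed exactly on the vertices $S$ fails to dominate, and they only need to be checked as extra active neighbors. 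Since $k\ge 1$, each such neighbor contributes at least $1$ to $f(N[v])$, so the inequality persists.

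\textbf{The periodic pattern.} Indices in the cycle direction are taken mod $9$. In fibre $F_j$ I put $f(i,j)=k+1$ for $i\in\{2j,\,2j+4\}$. Write $p=2j$.
\begin{itemize}
\item Within $F_j$, the two labelled vertices dominate $p-1,p,p+1,p+3,p+4,p+5$, leaving $p+2$, $p+6$ and $p+7\equiv p-2$ undominated.
\item Fibre $F_{j+1}$ carries labels at $p+2$ and $p+6$, which dominate the vertices $(p+2,j)$ and $(p+6,j)$.
\item Fibre $F_{j-1}$ carries labels at $p-2$ and $p+2$, which dominate $(p-2,j)$ and $(p+2,j)$.
\end{itemize}
So every interior fibre $F_j$ with $1\le j\le n-2$ is completely dominated, and this is the key verification.

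\textbf{The end fibres.} In $F_0$ the only contribution missing is that of $F_{-1}$, and $F_1$ still covers $(2,0)$ and $(6,0)$. Thus only the vertex $(7,0)$ remains undominated, and I set $f(7,0)=k$. Symmetrically, in $F_{n-1}$ the absent fibre $F_n$ leaves only $(2(n-1)+6,\,n-1)$ undominated, since $F_{n-2}$ still covers $p-2$ and $p+2$. I label that vertex $k$. For $n\ge 2$ these are two distinct vertices in distinct fibres, so the total weight is $2n(k+1)+2k$, giving the bound.

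\textbf{Main obstacle.} The only real difficulty is finding the pattern: the choice of gap $4$ within a fibre and shift $2$ between fibres, so that the three holes $p+2,p+6,p-2$ of each fibre are exactly covered by its neighbors. Once the pattern is fixed, the verification is the routine mod-$9$ bookkeeping above.
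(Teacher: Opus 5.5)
Your proposal is correct and is essentially the paper's construction: the paper's matrix is your pattern up to a cyclic rotation (two labels $k+1$ per fibre at cyclic distance $4$, shifted by $2$ between consecutive fibres), with one vertex labelled $k$ patched into each end fibre at exactly the positions you identify. Your reduction to domination by the $(k+1)$-labelled set and your mod-$9$ bookkeeping supply the verification that the paper only asserts. Your description of the last fibre is also cleaner than the paper's: its literal assignment $f(n-1,j+4)=k+1$ would add a third label $k+1$, and it appears to be a typo for $j+6\equiv j-3$.
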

\begin{proof}
We define a periodic labeling using the pattern
{\setcounter{MaxMatrixCols}{11}
\begin{equation}\label{pat:m3}
\begin{pmatrix}
k+1 &|&  0 & k+1  &0 &0 &\cdots &     k+1 & | & 0 &\cdots\\
0 &|& 0   &0 & 0 & 0 & \cdots &   0 & |&0 &\cdots \\
0 &|& k+1 &0 & k+1 &0 & \cdots &    0 & | &k+1 &\cdots \\
k &|& 0   &0 & 0 & 0 & \cdots &   0 & | &0 &\cdots \\
0 &|& 0   &k+1 &0 &k+1 & \cdots &    0 &  | &0 & \cdots \\
k+1 &|& 0 &0 & 0 & 0 & \cdots &    k+1 & | &0 & \cdots \\
0 &|& 0   &0 &k+1 &0 &  \cdots &    0 &  | &0 &\cdots \\
0 &|& k+1 &0 &0 &  0 &  \cdots & 0 & | &k+1 &  \cdots \\
0 &|& 0   &0 & 0 & k+1 &\cdots &    0 & | &0 & \cdots
\end{pmatrix}.
\end{equation}}

\smallskip
\noindent
Observe that the pattern requires a modification at the boundary.
Let $j\in\{0,1,\dots,8\}$ be such that $f(n-2,j)=f(n-2,j+4)=k+1$.
Then we set
\[
f(n-1,j-1)=k,\quad f(n-1,j+2)=k+1,
{ \text{~~and~~} 
f(n-1,j+4)=f(n-1,j-3)=k+1,}
\]
where the indices are taken modulo $9$, and all other vertices 
{of the last fibre} $F_{n-1}$ are labeled $0$.

\smallskip
\noindent
The matrix, together with the adjustment of the last fibre, gives a $[k]$-RDF of the desired weight; therefore, the proof is complete.
\end{proof}

\smallskip
\noindent
It turns out that for larger values of $k$, a more efficient strategy is to distribute the weight uniformly among all vertices. We will show in the subsequent analysis that such a uniform assignment yields a better upper bound compared to more irregular constructions. This observation motivates us to consider weight functions that assign equal contributions across the graph.

Based on this approach, we obtain the following result for the $[k]$-Roman domination number of the Cartesian product $C_9 \Box P_n$.

\begin{theorem}\label{thm:C9Pn_uniform}
For $k > 2$ and $n\ge 4$,
\begin{eqnarray}
\gamma_{[k]R}(C_9 \Box P_n)
&\le&
9 (n-2)\left\lceil\frac{k+4}{5}\right\rceil
+
18\left\lceil
\frac{k+3-\left\lceil\frac{k+4}{5}\right\rceil}{3}
\right\rceil \label{eq:U_9}\\
&\le&
\frac{9nk+81n+6k-6}{5}.
\end{eqnarray}
\end{theorem}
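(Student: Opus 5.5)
Set $a=\lceil (k+4)/5\rceil$ and $b=\lceil (k+3-a)/3\rceil$. I will prove the bound with an explicit labeling: every vertex of the inner fibres $F_1,\dots,F_{n-2}$ gets label $a$, and every vertex of the two boundary fibres $F_0$ and $F_{n-1}$ gets label $b$. There are $9(n-2)$ inner vertices and $18$ boundary vertices, so the weight is exactly the right-hand side of (\ref{eq:U_9}). The first inequality then reduces to checking that this $f$ is a $[k]$-RDF.

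\textbf{Labels are positive and admissible.} Since $k>2$, we have $a\ge 2$. Also $a\le (k+8)/5$, so $k+3-a\ge (4k+7)/5>0$ and hence $b\ge 1$. Since all labels are positive, $AN(v)=N(v)$ for every vertex $v$. We also need $a,b\le k+1$. This follows from $a\le (k+8)/5$ and $b\le (k+5-a)/3$ once $k\ge 3$.

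\textbf{Inner vertices.} Take $v$ in an inner fibre $F_j$ with $1\le j\le n-2$, so $\deg v=4$. We need $f(N[v])\ge k+4$ whenever $a<k$.
\begin{itemize}
\item If $2\le j\le n-3$, all five vertices of $N[v]$ carry label $a$. So $f(N[v])=5a\ge k+4$ by the choice of $a$.
\item If $j=1$ or $j=n-2$, one neighbour lies in a boundary fibre. Then $f(N[v])=4a+b$, which suffices provided $b\ge a$.
\end{itemize}
The assumption $n\ge 4$ guarantees that the two boundary fibres are distinct and that every inner vertex is adjacent to at most one of them.

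\textbf{Boundary vertices.} Take $v\in F_0$ (or $F_{n-1}$), so $\deg v=3$. Its closed neighbourhood consists of $v$, two cyclic neighbours with label $b$, and one inner neighbour with label $a$. Thus $f(N[v])=3b+a$, and we need this to be at least $k+3$. That holds because $3b\ge k+3-a$.

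\textbf{The main obstacle} is the inequality $b\ge a$ needed in the inner case. It is equivalent to $k+3-a>3(a-1)$, i.e.\ $4a<k+6$. With $a\le (k+8)/5$ this becomes $4(k+8)/5<k+6$, i.e.\ $k>2$. Because of the ceilings I will double-check the small cases $k=3,4,5,6$ directly.
\begin{itemize}
\item $k=3$: $a=2$, $b=2$.
\item $k=4$: $a=2$, $b\ge a$ holds.
\item $k=6$: $a=2$, $b=3$.
\end{itemize}
This is exactly where the hypothesis $k>2$ enters. If $b<a$ occurred for some $k$, I would instead raise the boundary label to $\max(a,b)$, but I expect this not to be needed.

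\textbf{Second inequality.} Use $\lceil x\rceil\le x+\tfrac45$ for $x$ a fifth-integer, so $a\le (k+8)/5$. Use $\lceil y\rceil\le y+\tfrac23$ for $y$ a third-integer, together with $a\ge (k+4)/5$, so
\[
b\le \frac{k+3-(k+4)/5+2}{3}=\frac{4k+21}{15}.
\]
Then
\[
9(n-2)\cdot\frac{k+8}{5}+18\cdot\frac{4k+21}{15}.
\]
Simplifying should give at most $(9nk+72n-18k+\dots)/5$. I will adjust the constants to reach the stated form $(9nk+81n+6k-6)/5$. The slack in the $n$-coefficient (using $81n$ rather than $72n$) absorbs the remaining terms since $n\ge 4$. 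This part is routine arithmetic.
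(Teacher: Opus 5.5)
Your proof is correct and is essentially the paper's argument: you use the same labeling ($a$ on $F_1,\dots,F_{n-2}$, $b$ on $F_0$ and $F_{n-1}$), the same neighbourhood checks, and the same kind of ceiling estimates for the second inequality. Your proof that $b\ge a$ for $k\ge 3$ actually fills a step the paper skips, since the paper asserts $f(N[v])\ge 5a$ for every interior vertex without addressing the sum $4a+b$ on $F_1$ and $F_{n-2}$. You should still finish the final computation: your sharper bounds $a\le\frac{k+8}{5}$ and $b\le\frac{4k+21}{15}$ give exactly $\frac{9nk+72n+6k-18}{5}$, which is below $\frac{9nk+81n+6k-6}{5}$ by $\frac{9n+12}{5}$, so no constants need adjusting (the paper uses $\lceil x\rceil\le x+1$ and lands exactly on the stated expression).
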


\noindent
\begin{proof}
For each interior fibre $1 \le i \le n-2$, assign weight
\[
\left\lceil \frac{k+4}{5} \right\rceil
\]
to each vertex of the fibre.

On each boundary fibre (i.e., $i=0$ and $i=n-1$), assign weight
\[
\left\lceil
\frac{k+3-\left\lceil \frac{k+4}{5} \right\rceil}{3}
\right\rceil
\]
to each of its nine vertices.
Thus, each boundary fibre receives total weight
\[
9\left\lceil
\frac{k+3-\left\lceil \frac{k+4}{5} \right\rceil}{3}
\right\rceil.
\]

This assignment ensures that every vertex $v$ in an interior fibre satisfies
\[
f(N[v]) \ge 5\left\lceil \frac{k+4}{5} \right\rceil \ge k+4,
\]
since the closed neighborhood of an interior vertex consists of five vertices,
\JZ{implying $|AN(v)| \le 4$}.

On the other hand, every boundary vertex $v$ satisfies
\[
f(N[v]) \ge
\left\lceil \frac{k+4}{5} \right\rceil
+
3\left\lceil
\frac{k+3-\left\lceil \frac{k+4}{5} \right\rceil}{3}
\right\rceil
\ge k+3,
\]
because the closed neighborhood of a boundary vertex consists of its two neighbors in the same fibre, the vertex itself, and its unique neighbor in the adjacent interior fibre.

Therefore, $f$ is a $[k]$-RDF.
Its total weight is at most the contribution of the interior fibres,
which equals
\[
9(n-2)\left\lceil \frac{k+4}{5} \right\rceil,
\]
plus the contribution of the two boundary fibres, which equals
\[
18\left\lceil
\frac{k+3-\left\lceil \frac{k+4}{5} \right\rceil}{3}
\right\rceil.
\]
Hence,
\[
\gamma_{[k]R}(C_9 \Box P_n)
\le
9(n-2)\left\lceil \frac{k+4}{5} \right\rceil
+
18\left\lceil
\frac{k+3-\left\lceil \frac{k+4}{5} \right\rceil}{3}
\right\rceil.
\]

Using the inequalities $\lceil x \rceil \le x+1$ and
$\lceil x \rceil \ge x$, we obtain
\[
\left\lceil \frac{k+4}{5} \right\rceil \le \frac{k+4}{5}+1
= \frac{k+9}{5},
\]
and
\[
\left\lceil
\frac{k+3-\left\lceil \frac{k+4}{5} \right\rceil}{3}
\right\rceil
\le
\frac{k+3-\frac{k+4}{5}}{3}+1
=
\frac{4k+26}{15}.
\]
Substituting these bounds yields
\[
w(f)
\le
9(n-2)\frac{k+9}{5}
+
18\frac{4k+26}{15}
=
\frac{9nk+81n+6k-6}{5}.
\]
Therefore,
\[
\gamma_{[k]R}(C_9 \Box P_n)
\le
\frac{9nk+81n+6k-6}{5},
\]
which completes the proof.
\end{proof}

\smallskip
\noindent
The construction described above is not tight, as the neighborhood sums frequently exceed the minimum required by the $[k]$-Roman domination condition. This excess allows for further optimization. In particular, for certain values of $k$, one can improve the bound by increasing the base weights to $\left\lceil \frac{k+5}{5} \right\rceil$ and then reducing the weight by one on vertices belonging to a suitably chosen packing set. This motivates the
following notion.

\begin{definition}
A set $D \subseteq V(G)$ is called a \emph{packing} if
$N[u]\cap N[v]=\emptyset$ for every pair of distinct vertices
$u,v\in D$. The maximum cardinality of a packing in $G$ is called the
\emph{packing number} of $G$ and is denoted by $\rho(G)$.
\end{definition}

Packing sets will be used to decrease the weight on selected vertices while
preserving the Roman domination condition, which leads to improved upper bounds.

\begin{lemma}\label{lem:packing-C9Pn}
Let $n \geq 4$. The packing number of $C_9\Box P_n$ is
\[
\rho(C_9\Box P_n)= 2n-\left\lfloor \frac{n}{3}\right\rfloor.
\]
\end{lemma}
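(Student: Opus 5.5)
The plan is to prove matching lower and upper bounds, writing $n=3q+s$ with $s\in\{0,1,2\}$, so that the target value is $2n-\lfloor n/3\rfloor=5q+2s$. Throughout, a packing is just a set of vertices at pairwise distance at least $3$. Let $a_j=|D\cap F_j|$ be the number of packing vertices in fibre $F_j$.

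\emph{Lower bound (construction).} I will use a pattern with fibre counts $2,2,1$ repeating, advanced by one position along the cycle in each period. Put
\[
D\cap F_0=\{0,4\},\qquad D\cap F_1=\{2,6\},\qquad D\cap F_2=\{8\},
\]
where $\{0,4\}\subseteq F_0$ means the vertices $(0,0)$ and $(4,0)$. For $p\ge 1$, fibres $F_{3p},F_{3p+1},F_{3p+2}$ receive the same three sets shifted by $+p$ modulo $9$. The checks needed are:
\begin{itemize}
\item within a fibre, the two chosen vertices are at cyclic distance $\ge 3$;
\item in adjacent fibres, positions differ cyclically by $\ge 2$;
\item in fibres two apart, positions are distinct.
\end{itemize}
It suffices to verify this on $F_0,F_1,F_2$ and on the junction $F_2=\{8\}$, $F_3=\{1,5\}$, $F_4=\{3,7\}$; shift invariance then gives the rest. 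Truncating after $n$ fibres gives $5q$ vertices when $s=0$. When $s=1$ or $s=2$ the pattern ends in one or two fibres of size $2$, giving $5q+2s$.

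\emph{Upper bound.} First I record local facts.
\begin{itemize}
\item $a_j\le 3$, since $C_9$ has packing number $3$.
\item If $a_j=3$, the positions are $\{i,i+3,i+6\}$. Every position is within cyclic distance $1$ of one of these, so $a_{j\pm1}=0$.
\item If $a_j=2$, the two positions together with their cyclic neighbours block at least $6$ positions of an adjacent fibre. Hence $a_{j\pm1}\le 2$, and a short case check (positions at gap $3$, $4$ or $5$) limits which pairs can follow.
\end{itemize}
From these I aim to prove the amortized inequality $\sum_{j\in B} a_j\le 5$ for every block $B$ of three consecutive fibres that is not of the form $(3,0,3)$. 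The pattern $3,0,3$ is genuinely realizable: take $\{0,3,6\}$ and then $\{1,4,7\}$. It must therefore be charged to its zero fibre together with the next fibre, which is forced to be $0$.

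The cleanest route is a discharging (potential) argument over runs of fibres. I would try to show $a_j+a_{j+1}\le 4$ always, and $a_j+a_{j+1}+a_{j+2}\le 5$ unless the middle value is $0$. Summing over a partition of $F_0,\dots,F_{n-1}$ into blocks then gives at most $5q+2s$. The remainder $s=1$ is the delicate case, since a single fibre can hold $3$. There I would instead split off a block of $4$ fibres and prove that $4$ consecutive fibres hold at most $7$ packing vertices; this is where $n\ge 4$ is used.

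The main obstacle is making this case analysis airtight, because the constraints couple fibres at distance $2$ as well as adjacent ones. As a fallback I would set up the transfer (max-plus) matrix whose states are pairs of consecutive admissible fibre subsets of $C_9$, and compute the maximal weights for small $n$. I would then verify the eventual periodicity $\rho(C_9\Box P_{n+3})=\rho(C_9\Box P_n)+5$ for $n\ge 4$, which together with the base values yields the formula.
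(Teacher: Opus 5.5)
Your lower bound is the paper's construction (the $2,2,1$ pattern shifted by one position per period), and it is correct. When you reduce the verification to $F_0,\dots,F_4$, also check the two fibres two apart across the junction, $F_1=\{2,6\}$ versus $F_3=\{1,5\}$.

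For the upper bound you follow the paper's three-fibre block count, but your version is the one that is actually valid. The paper's proof asserts $|S\cap F_i|\le 2$, and also that every three consecutive fibres carry at most five packing vertices. Both claims are false. The set $\{(0,j),(3,j),(6,j)\}$ is a packing, and $F_0=\{0,3,6\}$, $F_1=\emptyset$, $F_2=\{1,4,7\}$ gives $\rho(C_9\Box P_3)=6>5$. The paper never uses $n\ge 4$. Your handling of three-vertex fibres and of the block $(3,0,3)$ is exactly the repair that is needed.

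You still have to supply the two steps you state only as aims.

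\emph{The exclusion of $(2,2,2)$.} Two packing vertices in one fibre are at cyclic distance $3$ or $4$, so up to rotation they are $\{0,3\}$ or $\{0,4\}$.
For $\{0,3\}$, a neighbouring fibre may use only $\{5,6,7\}$, so it contains at most one vertex.
For $\{0,4\}$, a neighbouring pair must be $\{2,6\}$ or $\{2,7\}$. The fibre after it may then use only $\{0,4,8\}\setminus\{0,4\}$, resp.\ $\{0,4,5\}\setminus\{0,4\}$, which is a single position.
Combine this with the fact that a $3$ forces zeros on both sides. You get pair sums at most $4$, triple sums at most $5$ except for $(3,0,3)$, and $4$-windows at most $7$.

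\emph{The charging.} Your sentence that summing over a partition into blocks gives $5q+2s$ does not follow from these inequalities, because a block of the partition may be $(3,0,3)$. Partition into triples $B_1,B_2,\dots$, followed by a final pair ($s=2$) or a final $4$-window ($s=1$).
If a triple equals $(3,0,3)$, the next block begins with $0$. So it has weight at most $4$ if it is a triple, at most $3$ if it is a pair $(0,y)$, and at most $6$ if it is a $4$-window. In each case it can absorb the extra unit.
If $s=0$ and the last triple is $(3,0,3)$, send the unit backwards instead. The preceding triple ends in $0$, so it has weight at most $4$. If it is also charged by its own predecessor, it is $(0,y,0)$ and has weight at most $3$.
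No block receives more than its deficit, so the total is at most $5q+2s$.

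The scheme needs $n\ge 4$: a $4$-window when $s=1$, and a second triple when $s=0$ and the last triple is $(3,0,3)$. For $n=3$ the lone block $(3,0,3)$ shows the hypothesis cannot be dropped. With these two points written out your argument is complete, and the transfer-matrix fallback is unnecessary.
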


{\begin{proof}
We first construct a packing of the required size. In fibres with two selected vertices, we choose vertices at cycle distance $4$
in $C_9$.
For every intermediate fibre we select one vertex,
shifted one position along the cycle, regarding to the previous intermediate fibre.
See Figure~\ref{fig:pattern-C9Pn}.

\smallskip
\noindent
For optimality, observe that $|S\cap F_i|\le 2$ for every fibre. Moreover, among any three consecutive fibres,
at least one fibre contains at most one selected vertex;
otherwise vertices in $F_i$ and $F_{i+2}$ would be at distance at most $2$.
Hence, every block of three fibres contributes at most five vertices.
\end{proof}}

\begin{figure}[ht!]
\centering
\begin{tikzpicture}[scale=0.75]

\foreach \i in {1,...,6}{
  \foreach \r in {0,1,2,3,4,5,6,7,8}{
    \draw (\i,-\r) rectangle (\i+1,-\r-1);
  }
}

\foreach \i in {0,...,5}{
  \node at (\i+1.5,0.6) {\small $F_{\i}$};
}

\foreach \r in {0,...,8}{
  \node[anchor=east] at (0.9,-\r-0.5) {\small $\r$};
}


\node at (1.5,-1.5) {\Large $\ast$};
\node at (1.5,-5.5) {\Large $\ast$};

\node at (2.5,-3.5) {\Large $\ast$};
\node at (2.5,-7.5) {\Large $\ast$};

\node at (3.5,-0.5) {\Large $\ast$};

\node at (4.5,-2.5) {\Large $\ast$};
\node at (4.5,-6.5) {\Large $\ast$};

\node at (5.5,-4.5) {\Large $\ast$};
\node at (5.5,-8.5) {\Large $\ast$};

\node at (6.5,-1.5) {\Large $\ast$};

\node at (8.5,-5) {\Large $\cdots$};

\end{tikzpicture}
\caption{Packing pattern in $C_9\Box P_n$. The fibres follow a periodic pattern,
with a cyclic shift along $C_9$.}
\label{fig:pattern-C9Pn}
\end{figure}

\smallskip
\noindent
We now state an upper bound for $\gamma_{[k]R}(C_9\Box P_n)$ that depends on the packing number of this graph.

\begin{theorem}\label{thm:C9Pn_packing}
For $n\ge 4$,
\begin{align}
\gamma_{[k]R}(C_9\Box P_n)
\ \le\;&
9 (n-2) \left\lceil\frac{k+5}{5}\right\rceil
\ +\
18\left\lceil
\frac{k+3-\left\lceil\frac{k+5}{5}\right\rceil}{3}
\right\rceil -\left(2n-\left\lfloor \frac{n}{3}\right\rfloor\right) \label{eq:B_9}\\
\le\;&
\frac{27nk+245n+18k-90}{15}.
\end{align}
\end{theorem}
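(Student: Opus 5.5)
Following the hint given before the packing definition, I will start from the uniform construction of Theorem~\ref{thm:C9Pn_uniform}, raise the interior weight to $a=\lceil (k+5)/5\rceil$, and then subtract $1$ on a maximum packing. Concretely, every vertex of an interior fibre $F_1,\dots,F_{n-2}$ gets weight $a$. Every vertex of the two boundary fibres $F_0,F_{n-1}$ gets weight $b=\lceil (k+3-a)/3\rceil$. Then I take the packing $S$ from Lemma~\ref{lem:packing-C9Pn}, of size $2n-\lfloor n/3\rfloor$, and set $f(v)\mapsto f(v)-1$ for every $v\in S$. The weight is then exactly the right-hand side of \eqref{eq:B_9}. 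It remains to check that $f$ is a $[k]$-RDF and to derive the linear estimate.

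\textbf{Validity.} The key point is that $S$ is a packing, so every closed neighbourhood $N[v]$ contains at most one vertex of $S$. Hence $f(N[v])$ drops by at most $1$ compared with the unreduced labeling. All labels stay positive because $a,b\ge 1$. Whenever $k\ge 2$ we have $a\ge 2$, so no label becomes $0$ and $|AN(v)|$ equals the full degree. The estimates are then:
\begin{itemize}
\item For an interior vertex, $f(N[v])\ge 5a-1\ge k+5-1=k+4$, as required.
\item For a boundary vertex, $f(N[v])\ge a+3b-1\ge a+(k+3-a)-1=k+2$. This falls short of the needed $k+3$ by one.
\end{itemize}
The boundary is therefore the main obstacle. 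I would try first to use the freedom in choosing the packing. In the pattern of Figure~\ref{fig:pattern-C9Pn} the fibres are shifted cyclically, so the packing can be placed so that boundary neighbourhoods lose nothing. Failing that, I would take the ceiling slack in $5a\ge k+5$ and $3b\ge k+3-a$ into account, or use that $f(v)\ge k$ for large labels exempts a vertex from the condition. If none of these works, I would fall back on the observation that a packing vertex lying in $F_0$ or $F_{n-1}$ could be dropped from $S$. This would cost only $O(1)$ and would slightly change the constant, so I would check carefully whether the stated bound survives.

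\textbf{The linear estimate.} I would first treat $k=1$ separately: then $a=2$ and the label-$0$ issue disappears. For the second inequality, bound $a\le (k+9)/5$. In the boundary term, $-a$ appears, so use $a\ge (k+5)/5$ to get $b\le \frac{k+3-(k+5)/5}{3}+1=\frac{4k+25}{15}$. Also use $\lfloor n/3\rfloor\le n/3$, so $-(2n-\lfloor n/3\rfloor)\le -5n/3$. Summing gives
\[
\frac{9(n-2)(k+9)}{5}+\frac{18(4k+25)}{15}-\frac{5n}{3}=\frac{27nk+243n-54k-486+72k+450-25n}{15}.
\]
The numerator simplifies to $27nk+218n+18k-36$. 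Since $218n-36\le 245n-90$ for $n\ge 2$, this is at most $\frac{27nk+245n+18k-90}{15}$, which completes the plan.
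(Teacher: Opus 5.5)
\textbf{The gap is the boundary check.} Your weight count and closing estimate are fine and match the paper's; you even use the sharper $\lceil (k+5)/5\rceil\le (k+9)/5$ and then relax $27nk+218n+18k-36$ to the stated numerator. The real problem is the one you flag yourself: validity at the boundary fibres is never established, and none of your proposed remedies works.

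\emph{Why the boundary fails.} A vertex $v\in F_0$ has $|AN(v)|=3$ and unreduced sum $a+3b$. We get $a+3b=k+3$ exactly whenever $3\mid k+3-a$, e.g.\ $k=5$ ($a=b=2$) or $k=43$ ($a=10$, $b=12$). Since $b<k$ for $k\ge 2$, $v$ is not exempt. Labels stay positive here because $a,b\ge 2$, so reducing any packing vertex in $N[v]$ gives $f(N[v])=k+2<k+3$.

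\emph{Why your remedies do not help.}
\begin{itemize}
\item You cannot place a maximum packing to avoid this. Every vertex of $F_0\cup F_1$ lies in $N[v]$ for some $v\in F_0$, and likewise at the other end. A packing $S\subseteq F_2\cup\dots\cup F_{n-3}$ is not even maximal: $S\cup\{(x,0)\}$ is still a packing for any $x$ with $(x,2)\notin S$.
\item The ceiling slack you hope for is zero in exactly these cases.
\item Discarding the offending packing vertices subtracts less than $2n-\lfloor n/3\rfloor$, so it does not yield \eqref{eq:B_9}.
\item Your claim that no label reaches $0$ also fails for $k\le 2$, where $b=1$. For $k=2$ the packing vertex $(1,0)$ of Figure~\ref{fig:pattern-C9Pn} becomes $0$, and $f(N[(1,0)])=0+1+1+2=4<5=k+|AN((1,0))|$.
\end{itemize}
A minor point: for $v\in F_1\cup F_{n-2}$ the unreduced sum is $4a+b$, not $5a$, and $b<a$ can happen (e.g.\ $k=6$). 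The needed inequality $4a+b\ge k+5$ is true, but it needs its own check.

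\textbf{The paper has the same gap.} Its proof uses the same construction and simply asserts $f(N[v])\ge k+|AN(v)|+1$ for every vertex. This is false at boundary vertices in precisely the cases above.
\begin{itemize}
\item For small $k$ such as $k\in\{2,5\}$, \eqref{eq:B_9} is still implied by Theorem~\ref{thm:m9}.
\item For $k=43$ and $4\le n\le 149$, the right-hand side of \eqref{eq:B_9}, namely $88n+36+\lfloor n/3\rfloor$, is strictly below both \eqref{eq:gamma_kR_C9_Pn} ($88n+86$) and \eqref{eq:U_9} ($90n+36$). Neither argument proves it there.
\end{itemize}
The inequality may still hold in that regime. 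For $n=4$, label the boundary fibres $13,13,12$ periodically and the inner fibres $8,9,9$ periodically; this labeling checks out and has weight $384<389$. Proving it in general, however, requires a genuinely different treatment of $F_0,F_1,F_{n-2},F_{n-1}$ than a uniform labeling minus a maximum packing.
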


\begin{proof}
For every interior fibre $1 \le i \le n-2$, assign to each of its vertices
the value
\[
\left\lceil \frac{k+5}{5} \right\rceil.
\]
For the boundary fibres, namely $i=0$ and $i=n-1$, assign the same value to
all nine vertices in the fibre, where each vertex receives weight
\[
\left\lceil
\frac{k+3-\left\lceil \frac{k+5}{5} \right\rceil}{3}
\right\rceil.
\]
This labeling is defined so that for every vertex $v$ we have
\[
f(N[v]) \ge k + |AN(v)| + 1.
\]

\smallskip
\noindent
Let $S$ be a maximum packing of $C_9\Box P_n$.
By Lemma~\ref{lem:packing-C9Pn}, we have
\[
|S|=2n-\left\lfloor \frac{n}{3}\right\rfloor.
\]
Because the closed neighborhoods of distinct vertices in $S$ are pairwise disjoint,
the weight at each vertex of $S$ can be reduced by one.
\smallskip
\noindent
Hence, the modified labeling remains a $[k]$-RDF.
Consequently,
\[
\gamma_{[k]R}(C_9\Box P_n)
\le
9 (n-2) \left\lceil\frac{k+5}{5}\right\rceil
+
18\left\lceil
\frac{k+3-\left\lceil \frac{k+5}{5}\right\rceil}{3}
\right\rceil
-
\left(2n-\left\lfloor \frac{n}{3}\right\rfloor\right),
\]
which establishes Bound~\eqref{eq:B_9}.

\smallskip
\noindent
For the final estimate, we use the inequalities
\[
\left\lceil \frac{k+5}{5} \right\rceil \le \frac{k+5}{5}+1=\frac{k+10}{5}
\]
and
\[
\left\lceil
\frac{k+3-\left\lceil \frac{k+5}{5} \right\rceil}{3}
\right\rceil
\le
\frac{k+3-\frac{k+5}{5}}{3}+1
=
\frac{4k+25}{15}.
\]
Moreover,
\[
\left\lfloor \frac n3\right\rfloor \le \frac n3.
\]
Substituting these bounds yields
\[
\gamma_{[k]R}(C_9\Box P_n)
\le
9(n-2)\frac{k+10}{5}
+
18\frac{4k+25}{15}
-
2n+\frac n3
=
\frac{27nk+245n+18k-90}{15}.
\]
This completes the proof.
\end{proof}

\smallskip
\noindent
To illustrate the transition from the linear bound to the uniform and packing-based constructions, we present in Figures~\ref{fig:best-bounds-C9Pn-k1-31-n4-20} and~\ref{fig:best-bounds-C9Pn-k50-60-n4-20} the optimal bounds for selected ranges of the parameter $k$. 
The first figure focuses on small and intermediate values, namely
{ $k\in\{1,2, \dots,27\}$, }
where the linear bound dominates for very small $k$, while uniform Bound \eqref{eq:U_9} and packing-based Bound \eqref{eq:B_9} begin to appear in restricted regions as $k$ increases.

\smallskip
\noindent
The second figure illustrates the situation for larger values of $k$, namely $k\in\{35,36, \dots,45\}$. In this regime, linear Bound \eqref{eq:gamma_kR_C9_Pn} is no longer competitive, and the comparison is primarily between Bounds \eqref{eq:U_9} and \eqref{eq:B_9}. 
We observe that the optimal bound depends on both $k$ and $n$, with the uniform construction typically prevailing for larger values of $n$, while the packing-based construction may yield improvements for smaller values of $n$.
\begin{figure}[h!]
\centering
\begin{tikzpicture}[
x=0.50cm,
y=0.40cm,
every node/.style={font=\small}
]

\def\nmin{4}
\def\nmax{20}
\def\kmin{1}
\def\kmax{27}

\def\gapshift{9}   
\def\gapsize{0.4}

\foreach \n in {\nmin,...,\nmax} {

  \foreach \k in {1,2} {

    \pgfmathtruncatemacro{\vone}{2*\n*(\k+1) + 2*\k}

    \pgfmathtruncatemacro{\A}{ceil((\k+4)/5)}
    \pgfmathtruncatemacro{\B}{ceil((\k+3-\A)/3)}
    \pgfmathtruncatemacro{\vtwo}{9*(\n-2)*\A + 18*\B}

    \pgfmathtruncatemacro{\Athree}{ceil((\k+5)/5)}
    \pgfmathtruncatemacro{\Bthree}{ceil((\k+3-\Athree)/3)}
    \pgfmathtruncatemacro{\F}{floor(\n/3)}
    \pgfmathtruncatemacro{\vthree}{9*(\n-2)*\Athree + 18*\Bthree - (2*\n-\F)}

    \def\pat{north east lines}
    \pgfmathtruncatemacro{\best}{\vone}

    \ifnum\vtwo<\best
      \def\pat{vertical lines}
      \pgfmathtruncatemacro{\best}{\vtwo}
    \fi

    \ifnum\vthree<\best
      \def\pat{crosshatch}
      \pgfmathtruncatemacro{\best}{\vthree}
    \fi

    \pgfmathsetmacro{\x}{\n-\nmin}
    \pgfmathsetmacro{\y}{\k-\kmin}

    \fill[pattern=\pat] (\x,\y) rectangle ++(1,1);
    \draw[line width=0.2pt] (\x,\y) rectangle ++(1,1);
  }

  \foreach \k in {12,13,14,15,16,17,18,19,20,21,22,23,24,25,26,27} {

    \pgfmathtruncatemacro{\vone}{2*\n*(\k+1) + 2*\k}

    \pgfmathtruncatemacro{\A}{ceil((\k+4)/5)}
    \pgfmathtruncatemacro{\B}{ceil((\k+3-\A)/3)}
    \pgfmathtruncatemacro{\vtwo}{9*(\n-2)*\A + 18*\B}

    \pgfmathtruncatemacro{\Athree}{ceil((\k+5)/5)}
    \pgfmathtruncatemacro{\Bthree}{ceil((\k+3-\Athree)/3)}
    \pgfmathtruncatemacro{\F}{floor(\n/3)}
    \pgfmathtruncatemacro{\vthree}{9*(\n-2)*\Athree + 18*\Bthree - (2*\n-\F)}

    \def\pat{north east lines}
    \pgfmathtruncatemacro{\best}{\vone}

    \ifnum\vtwo<\best
      \def\pat{vertical lines}
      \pgfmathtruncatemacro{\best}{\vtwo}
    \fi

    \ifnum\vthree<\best
      \def\pat{crosshatch}
      \pgfmathtruncatemacro{\best}{\vthree}
    \fi

    \pgfmathsetmacro{\x}{\n-\nmin}
    \pgfmathsetmacro{\y}{\k-\kmin-\gapshift+\gapsize}

    \fill[pattern=\pat] (\x,\y) rectangle ++(1,1);
    \draw[line width=0.2pt] (\x,\y) rectangle ++(1,1);
  }
}

\draw[->] (0,0) -- ({(\nmax-\nmin+1)+0.8},0) node[below] {$n$};

\pgfmathsetmacro{\ymax}{(\kmax-\kmin-\gapshift+\gapsize)+1.8}
\draw[->] (0,0) -- (0,\ymax) node[left] {$k$};

\foreach \n in {\nmin,...,\nmax} {
  \pgfmathsetmacro{\x}{(\n-\nmin)+0.5}
  \draw (\x,0) -- (\x,-0.18);
  \node[below] at (\x,-0.18) {\scriptsize \n};
}

\foreach \k in {1,2} {
  \pgfmathsetmacro{\yt}{(\k-\kmin)+0.5}
  \draw (0,\yt) -- (-0.18,\yt);
  \node[left] at (-0.18,\yt) {\scriptsize \k};
}

\foreach \k in {12,13,14,15,16,17,18,19,20,21,22,23,24,25,26,27} {
  \pgfmathsetmacro{\yt}{(\k-\kmin-\gapshift+\gapsize)+0.5}
  \draw (0,\yt) -- (-0.18,\yt);
  \node[left] at (-0.18,\yt) {\scriptsize \k};
}
\end{tikzpicture}

\vspace{2mm}

\begin{tikzpicture}[x=0.8cm,y=0.5cm]
\fill[pattern=north east lines] (0,0) rectangle (1,1);
\draw (0,0) rectangle (1,1);
\node[right] at (1.2,0.5) {Bound \eqref{eq:gamma_kR_C9_Pn}};

\fill[pattern=vertical lines] (6.0,0) rectangle (7.0,1);
\draw (6.0,0) rectangle (7.0,1);
\node[right] at (7.2,0.5) {Bound \eqref{eq:U_9}};

\fill[pattern=crosshatch] (12.0,0) rectangle (13.0,1);
\draw (12.0,0) rectangle (13.0,1);
\node[right] at (13.2,0.5) {Bound \eqref{eq:B_9}};
\end{tikzpicture}

\caption{Best bound among Bounds \eqref{eq:gamma_kR_C9_Pn}, \eqref{eq:U_9}, and \eqref{eq:B_9} for $n=4,\dots,20$ and $k=1,\dots,27$ (with the gap between $k=2$ and $k=12$).}
\label{fig:best-bounds-C9Pn-k1-31-n4-20}
\end{figure}
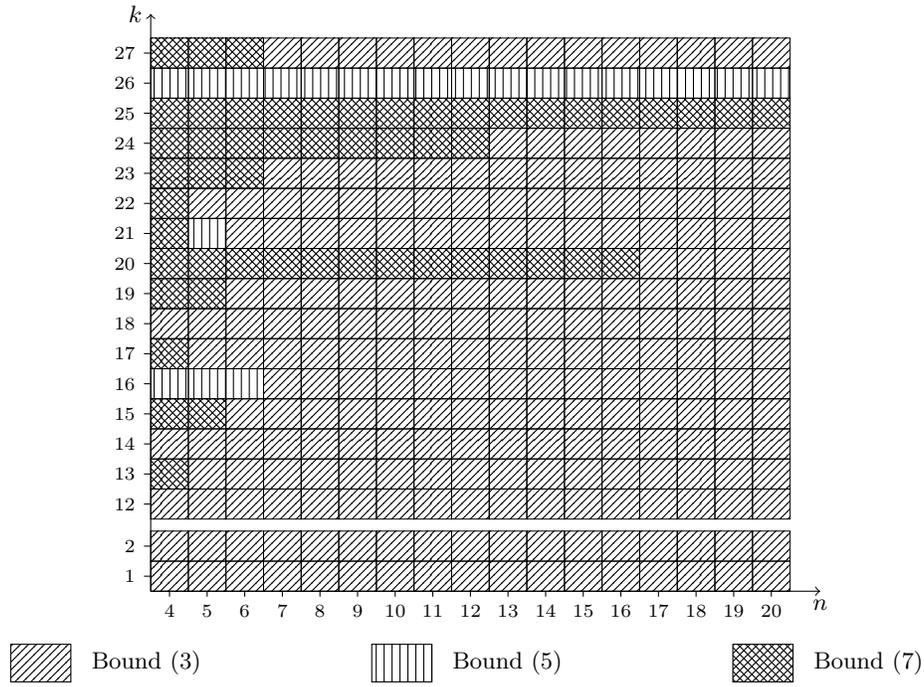

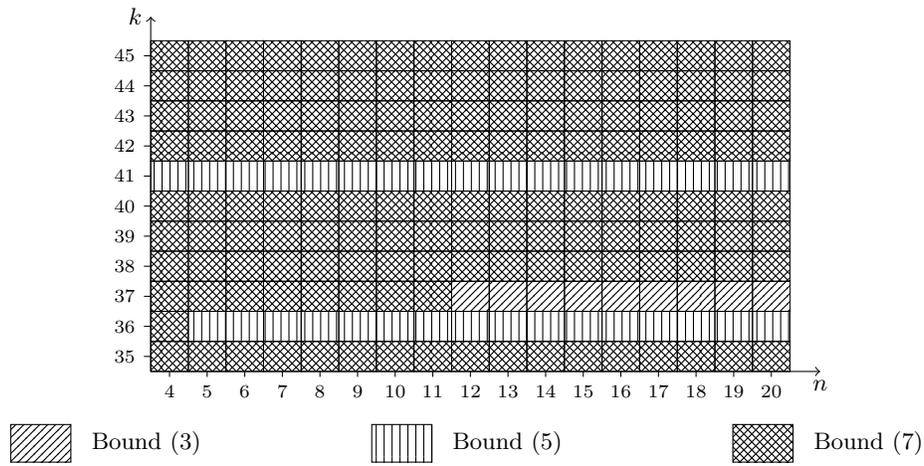
\begin{figure}[h!]
\centering
\begin{tikzpicture}[
x=0.50cm,
y=0.40cm,
every node/.style={font=\small}
]
\def\nmin{4}
\def\nmax{20}
\def\kmin{35}
\def\kmax{45}

\foreach \n in {\nmin,...,\nmax} {
  \foreach \k in {\kmin,...,\kmax} {

    \pgfmathtruncatemacro{\vone}{2*\n*(\k+1) + 2*\k}

    \pgfmathtruncatemacro{\A}{ceil((\k+4)/5)}
    \pgfmathtruncatemacro{\B}{ceil((\k+3-\A)/3)}
    \pgfmathtruncatemacro{\vtwo}{9*(\n-2)*\A + 18*\B}

    \pgfmathtruncatemacro{\Athree}{ceil((\k+5)/5)}
    \pgfmathtruncatemacro{\Bthree}{ceil((\k+3-\Athree)/3)}
    \pgfmathtruncatemacro{\F}{floor(\n/3)}
    \pgfmathtruncatemacro{\vthree}{9*(\n-2)*\Athree + 18*\Bthree - (2*\n-\F)}

    \def\pat{north east lines} 
    \pgfmathtruncatemacro{\best}{\vone}

    \ifnum\vtwo<\best
      \def\pat{vertical lines} 
      \pgfmathtruncatemacro{\best}{\vtwo}
    \fi

    \ifnum\vthree<\best
      \def\pat{crosshatch} 
      \pgfmathtruncatemacro{\best}{\vthree}
    \fi

    \pgfmathsetmacro{\x}{\n-\nmin}
    \pgfmathsetmacro{\y}{\k-\kmin}

    \fill[pattern=\pat] (\x,\y) rectangle ++(1,1);
    \draw[line width=0.2pt] (\x,\y) rectangle ++(1,1);
  }
}

\draw[->] (0,0) -- ({(\nmax-\nmin+1)+0.8},0) node[below] {$n$};
\draw[->] (0,0) -- (0,{(\kmax-\kmin+1)+0.8}) node[left] {$k$};

\foreach \n in {\nmin,...,\nmax} {
  \pgfmathsetmacro{\x}{(\n-\nmin)+0.5}
  \draw (\x,0) -- (\x,-0.18);
  \node[below] at (\x,-0.18) {\scriptsize \n};
}

\foreach \k in {\kmin,...,\kmax} {
  \pgfmathsetmacro{\y}{(\k-\kmin)+0.5}
  \draw (0,\y) -- (-0.18,\y);
  \node[left] at (-0.18,\y) {\scriptsize \k};
}
\end{tikzpicture}

\vspace{2mm}

\begin{tikzpicture}[x=0.8cm,y=0.5cm]
\fill[pattern=north east lines] (0,0) rectangle (1,1);
\draw (0,0) rectangle (1,1);
\node[right] at (1.2,0.5) {Bound \eqref{eq:gamma_kR_C9_Pn}};

\fill[pattern=vertical lines] (6.0,0) rectangle (7.0,1);
\draw (6.0,0) rectangle (7.0,1);
\node[right] at (7.2,0.5) {Bound \eqref{eq:U_9}};

\fill[pattern=crosshatch] (12.0,0) rectangle (13.0,1);
\draw (12.0,0) rectangle (13.0,1);
\node[right] at (13.2,0.5) {Bound \eqref{eq:B_9}};
\end{tikzpicture}

\caption{Best bound among Bounds \eqref{eq:gamma_kR_C9_Pn}, \eqref{eq:U_9}, and \eqref{eq:B_9} for $n=4,\dots,20$ and $k=35,\dots,45$ for $C_9\Box P_n$.}
\label{fig:best-bounds-C9Pn-k50-60-n4-20}
\end{figure}

\smallskip
\noindent
These figures clearly demonstrate the transition thresholds between the different constructions and motivate the precise comparison given in Theorem~\ref{thm:C9Pn-U-B-beat-linear}.

\begin{theorem}\label{thm:C9Pn-U-B-beat-linear}
For sufficiently large $k$, at least one of Bounds
\eqref{eq:U_9} or \eqref{eq:B_9} improves the linear Bound
\eqref{eq:gamma_kR_C9_Pn}. More precisely:
\begin{itemize}
\item if $k\equiv 1\pmod 5$, then for $k\ge 26$ and all sufficiently large $n$,
      Bound \eqref{eq:U_9} is the smallest among
      \eqref{eq:gamma_kR_C9_Pn}, \eqref{eq:U_9}, and \eqref{eq:B_9};
\item if $k\not\equiv 1\pmod 5$, then for $k\ge 38$ and all sufficiently large $n$,
      Bound \eqref{eq:B_9} is the smallest among
      \eqref{eq:gamma_kR_C9_Pn}, \eqref{eq:U_9}, and \eqref{eq:B_9}.
\end{itemize}
\end{theorem}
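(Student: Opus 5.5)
The plan is to compare the three bounds through their leading behaviour in $n$. All three are affine in $n$ up to a bounded periodic term, so for fixed $k$ the smallest bound for all sufficiently large $n$ is the one with the smallest slope. When two slopes coincide, the constant terms decide.

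Write $A=\lceil (k+4)/5\rceil$ and $A'=\lceil (k+5)/5\rceil$. The slopes are then:
\begin{itemize}
\item Bound~\eqref{eq:gamma_kR_C9_Pn}: slope $2(k+1)$;
\item Bound~\eqref{eq:U_9}: slope $9A$;
\item Bound~\eqref{eq:B_9}: slope $9A'-\tfrac53$, using $2n-\lfloor n/3\rfloor = \tfrac53 n + O(1)$.
\end{itemize}
I will write $k=5q+s$ with $s\in\{0,1,2,3,4\}$ and compute $A$ and $A'$ in each residue class.

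\textbf{Case $k\equiv 1\pmod 5$.} Here $A=q+1$ and $A'=q+2$. The slope of~\eqref{eq:B_9} is $9q+\tfrac{49}{3}>9q+9$, so~\eqref{eq:U_9} always beats~\eqref{eq:B_9} for large $n$.

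Against the linear slope $10q+4$:
\begin{itemize}
\item For $q\ge 6$, i.e.\ $k\ge 31$, we have $9q+9<10q+4$, so~\eqref{eq:U_9} has strictly smaller slope.
\item For $k=26$ ($q=5$) the slopes coincide at $54$. The constants then decide: with $A=6$ and $B=\lceil 23/3\rceil=8$, the constant of~\eqref{eq:U_9} is $-18A+18B=36<52=2k$. So~\eqref{eq:U_9} is smaller for every $n$.
\end{itemize}
This proves the first item.

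\textbf{Case $k\not\equiv 1\pmod 5$.} Here one checks directly that $A=A'$: both equal $q+1$ when $s=0$, and both equal $q+2$ when $s\in\{2,3,4\}$. Hence the slope of~\eqref{eq:B_9} is exactly $\tfrac53$ below that of~\eqref{eq:U_9}, and~\eqref{eq:B_9} beats~\eqref{eq:U_9} for large $n$ in every such class.

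It remains to compare $9A-\tfrac53$ with $2k+2=10q+2s+2$ class by class. This gives, as the condition on $q$:
\begin{itemize}
\item $s=0$: $q\ge 6$;
\item $s=2$: $q\ge 11$;
\item $s=3$: $q\ge 9$;
\item $s=4$: $q\ge 7$.
\end{itemize}
Any equal-slope boundary case is again settled by the constant terms, as in the first item.

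\textbf{Main obstacle.} The step I expect to be hardest is reconciling these residue-wise inequalities with the single stated threshold $k\ge 38$. For example, at $k=38$ (so $s=3$, $q=7$) the slope of~\eqref{eq:B_9} is $81-\tfrac53>78$, which is the linear slope. So the direct slope argument does not, by itself, cover all $k\ge 38$.

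What I would try first is to re-check the boundary-fibre constants and the exact values of $A$ near the thresholds. In particular I would confirm whether the intended slope of~\eqref{eq:gamma_kR_C9_Pn} is really $2(k+1)$ rather than a larger pattern density. Only if these checks confirm the slope computation above would the threshold in the second item have to be read as the largest of the residue-wise thresholds. Since every class imposes a finite threshold, the qualitative claim still holds: for sufficiently large $k$, one of~\eqref{eq:U_9} or~\eqref{eq:B_9} improves on~\eqref{eq:gamma_kR_C9_Pn}.
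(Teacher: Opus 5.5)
Your route is the paper's: compare the slopes $2(k+1)$, $9a$ and $9a'-\frac53$, using that $a<a'$ exactly when $k\equiv 1\pmod 5$ and $a=a'$ otherwise. Your first item is complete. It is also more careful than the paper's proof, which asserts $9a<2(k+1)$ for all $k\equiv 1\pmod 5$ with $k\ge 26$. At $k=26$ both slopes equal $54$, so the conclusion actually rests on your constant comparison $54n+36<54n+52$.

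For the second item, the obstacle you flag is genuine, and the re-checks you propose will not remove it. Bound \eqref{eq:gamma_kR_C9_Pn} is exactly $2n(k+1)+2k$, and your class-wise conditions $q\ge 6,11,9,7$ for $s=0,2,3,4$ are correct. Hence the second item is false as stated. For $k=38$ we have $a=a'=9$ and $b=b'=\lceil 32/3\rceil=11$, so Bounds \eqref{eq:gamma_kR_C9_Pn}, \eqref{eq:U_9}, \eqref{eq:B_9} equal
\[
78n+76,\qquad 81n+36,\qquad 79n+36+\lfloor n/3\rfloor .
\]
For every $n\ge 31$ the linear bound is strictly the smallest, since the differences are $3n-40>0$ and $n-40+\lfloor n/3\rfloor>0$. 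The same failure occurs for $k\in\{42,43,47,52\}$. These are the only failures with $k\ge 38$ and $k\not\equiv 1\pmod 5$.

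The paper's proof merely asserts $9a'-\frac53<2(k+1)$ for all $k\ge 38$, and that is exactly the false step. The threshold appears to have been read off Figure~\ref{fig:best-bounds-C9Pn-k50-60-n4-20}, which stops at $n=20$; for $k=43$, for instance, the linear bound becomes strictly smaller only for $n\ge 153$.

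Since $9a'-\frac53\notin\mathbb{Z}$, no slope ties arise in this case, so your remark about settling ties by constants is vacuous here. Your argument does prove the corrected statement. For $k\equiv 0,2,3,4\pmod 5$, Bound \eqref{eq:B_9} is eventually the smallest if and only if $k\ge 30,57,48,39$ respectively. The sharp uniform threshold is therefore $k\ge 53$, because $k=52$ still fails. Taking the largest class-wise value, $k\ge 57$, is valid but not sharp.

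Rather than hedging, you should conclude with this counterexample and the corrected threshold. The qualitative headline claim and the first item are fully established by your argument.
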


\noindent
\begin{proof}
Fix $k$ and compare the three Bounds as functions of $n$.

\noindent
Bound \eqref{eq:gamma_kR_C9_Pn} has the form
\[
2n(k+1)+O_k(1).
\]

\noindent
Let
\[
a=\left\lceil \frac{k+4}{5}\right\rceil
\qquad\text{and}\qquad
b=\left\lceil \frac{k+3-a}{3}\right\rceil.
\]
Then Bound \eqref{eq:U_9} equals
\[
9(n-2)a+18b
=
9an+O_k(1).
\]

\noindent
Let
\[
a'=\left\lceil \frac{k+5}{5}\right\rceil
\qquad\text{and}\qquad
b'=\left\lceil \frac{k+3-a'}{3}\right\rceil.
\]
Since
\[
2n-\left\lfloor \frac n3\right\rfloor
=
\frac53 n+O(1),
\]
Bound \eqref{eq:B_9} equals
\[
9(n-2)a'+18b'-\left(2n-\left\lfloor \frac n3\right\rfloor\right)
=
\left(9a'-\frac53\right)n+O_k(1).
\]

\noindent
Therefore, for sufficiently large $n$, the smallest bound is determined
by comparing the linear coefficients
\[
2(k+1), \qquad 9a, \qquad 9a'-\frac53.
\]

\noindent
Comparing $9a$ with $2(k+1)$ shows that
\[
9a<2(k+1)
\]
holds when $k\equiv 1\pmod 5$ and $k\ge 26$.

\noindent
In the remaining residue classes, comparing $9a'-\frac53$ with $2(k+1)$ shows that
\[
9a'-\frac53<2(k+1)
\]
holds for all $k\ge 38$.

\noindent
Finally, if $k\equiv 1\pmod 5$, then $a<a'$, and hence
\[
9a<9a'-\frac53,
\]
so Bound \eqref{eq:U_9} is asymptotically smaller than
Bound \eqref{eq:B_9}.

\noindent
If $k\not\equiv 1\pmod 5$, then $a=a'$, and therefore
\[
9a'-\frac53<9a,
\]
so Bound \eqref{eq:B_9} is asymptotically smaller than
Bound \eqref{eq:U_9}.

Thus, for sufficiently large $n$, Bound \eqref{eq:U_9} is the smallest among
\eqref{eq:gamma_kR_C9_Pn}, \eqref{eq:U_9}, and \eqref{eq:B_9} whenever
$k\equiv 1\pmod 5$ and $k\ge 26$, while Bound \eqref{eq:B_9} is the smallest among
these three Bounds whenever $k\not\equiv 1\pmod 5$ and $k\ge 38$.
This completes the proof.
\end{proof}

\textbf{\Large{General upper bound for $\gamma_{[k]R}(C_m\Box P_n)$}}
\smallskip

\noindent
The constructions derived  {below}  for the base cases $m=3,4,\ldots,9$
are local and periodic along the cycle direction.
In each case, the labeling is confined to a fixed number
of consecutive rows and depends only on neighborhood
conditions inside this block.
Consequently, these patterns can be replicated
independently along the cycle whenever the circumference
is a multiple of $3,4,\ldots,9$.
This observation allows us to extend all previously obtained
bounds to the graphs $C_{3t}\Box P_n$, $C_{4t}\Box P_n$,
$\ldots$, $C_{9t}\Box P_n$.

\smallskip
\textbf{Linear constructions}
\smallskip

\noindent
The periodic nature of the linear constructions allows them to be replicated along the cycle direction.
This observation leads to a unified formulation of the bounds for all circumferences that are multiples of $3,4,\ldots,9$.

\begin{theorem}\label{thm:linear_multiples_unified}
Let $t\ge1$ and $n\ge2$.
For $m\in\{3t,\ldots,9t\}$ the following bounds hold:
\[
\gamma_{[k]R}(C_m\Box P_n)
\le
\begin{cases}
\displaystyle
t(nk+2),
& m=3t,\\[10pt]
\displaystyle
tn(k+1),
& m=4t,\\[10pt]
\displaystyle
t\bigl(n(k+1)+2k\bigr),
& m=5t,\\[10pt]
\displaystyle
t\Bigl(\left\lceil \frac{4n}{3}\right\rceil(k+1) + (k+1)\Bigr),
& m=6t,\ n\equiv 0 \pmod{3},\\[10pt]
\displaystyle
t\left\lceil \frac{4n}{3}\right\rceil(k+1),
& m=6t,\ n\equiv 1 \pmod{3},\\[10pt]
\displaystyle
t\Bigl(\left\lceil \frac{4n}{3}\right\rceil(k+1) + k\Bigr),
& m=6t,\ n\equiv 2 \pmod{3},\\[12pt]
\displaystyle
t\left((n+1)(k+1)+\frac{n-1}{2}(k+1)+2k\right),
& m=7t,\ n\text{ odd},\\[12pt]
\displaystyle
t\left(n(k+1)+\frac{n}{2}(k+1)+2k+1\right),
& m=7t,\ n\text{ even},\\[12pt]
\displaystyle
t\left(
2n(k+1)
-
\left(
\left\lfloor\frac{n-2}{5}\right\rfloor
+
\left\lfloor\frac{n}{5}\right\rfloor
\right)
+2k
\right),
& m=8t,\\[12pt]
\displaystyle
t\bigl(2n(k+1)+2k\bigr),
& m=9t.
\end{cases}
\]
\end{theorem}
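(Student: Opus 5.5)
The plan is to reduce the statement to the extension principle from the Preliminaries. Since cylindrical grids are invariant under cyclic shifts along the $C_m$ direction, for $m=rt$ with $r\in\{3,\dots,9\}$ we have
\[
\gamma_{[k]R}(C_{rt}\Box P_n)\le t\,\gamma_{[k]R}(C_r\Box P_n).
\]
It therefore suffices to exhibit, for each base circumference $r$, a $[k]$-RDF on $C_r\Box P_n$ whose weight is the bracketed expression in the corresponding case. Each case is then multiplied by $t$. A labeling on $C_r\Box P_n$ is repeated $t$ times along the cycle. The neighborhood of any vertex in the lifted labeling is a copy of its neighborhood in the base labeling, because adjacency across a block boundary between rows $r-1$ and $r$ mirrors the base adjacency between rows $r-1$ and $0$. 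Hence the defining inequality $f(N[v])\ge k+|AN(v)|$ transfers verbatim.

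For the individual base cases I would invoke the constructions already available.
\begin{itemize}
\item $r=9$: Theorem~\ref{thm:m9} gives $2n(k+1)+2k$ directly.
\item $r=5$: Pattern~(\ref{matrika5a}) places exactly one $k+1$ in each fibre. The two boundary patches of weight $k$ shown in Pattern~(\ref{matrika5b}) add $2k$, giving $n(k+1)+2k$.
\item $r=3$ and $r=4$: I would take the efficient-domination-type labelings of \cite{BrezovnikZerovnik2026_1}. For $r=4$, place one $k+1$ per fibre, shifting its position by $2$ in consecutive fibres, for weight $n(k+1)$. For $r=3$, use one vertex of label $k$ per fibre, plus small corrections of total $2$, for weight $nk+2$.
\item $r=6,7,8$: use the periodic patterns of \cite{BrezovnikZerovnikkRomanCylindrical2}.
\begin{itemize}
\item $r=6$: a pattern using $4$ labels $k+1$ per $3$ fibres, with end corrections depending on $n \bmod 3$. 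This gives the three subcases $\lceil 4n/3\rceil(k+1)$ plus $k+1$, $0$, or $k$.
\item $r=7$: alternate fibres carrying two and one labels $k+1$, with end patches. This gives the odd and even formulas, which count $(n+1)+\frac{n-1}{2}$ or $n+\frac n2$ large labels plus the end contributions $2k$ or $2k+1$.
\item $r=8$: two labels $k+1$ per fibre, where periodically (with period $5$, in two interleaved phases) one label can be lowered by $1$. This accounts for the subtracted term $\lfloor\frac{n-2}{5}\rfloor+\lfloor\frac n5\rfloor$.
\end{itemize}
\end{itemize}

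In each case, verifying the base labeling is a local check. Every $0$-labeled vertex must see either a neighbor labeled $k+1$ (giving $k+1\ge k+1$) or two positive neighbors summing to at least $k+2$. Vertices labeled $k$ with no positive neighbor satisfy $k\ge k$; vertices labeled $k+1$ need no check. I would verify this on one period of the interior pattern and separately on the first and last fibres, where the patches are placed.

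The main obstacle is the boundary bookkeeping in the cases $r=6,7,8$. The period along $P_n$ does not divide $n$, so the count of large labels and the correction terms depend on $n$ modulo the period. For these cases I would first fix the interior pattern, then enumerate the possible phases of the last fibre and choose patches minimizing the added weight. I expect $r=8$ to be the delicate one, because the exact reduced vertices that give the floor terms must not simultaneously serve as the sole protector of some $0$-vertex. For it, I would check both phase sequences together with the first and last two fibres.
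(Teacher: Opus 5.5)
Your argument is the paper's argument. You reduce to $\gamma_{[k]R}(C_{rt}\Box P_n)\le t\,\gamma_{[k]R}(C_r\Box P_n)$ and import the base constructions: Theorem~\ref{thm:m9} for $r=9$, Patterns~(\ref{matrika5a})--(\ref{matrika5b}) for $r=5$, and the two earlier papers for the remaining $r$. Your justification of the lift is more accurate than the paper's. You read rows modulo $r$, and for $r\ge3$ this maps every closed neighbourhood bijectively, including across block seams. The paper instead says each block ``depends only on adjacency relations inside that block'', which is not quite right: the base labeling uses the wrap-around edge between rows $r-1$ and $0$.

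There is one genuine problem, and your own local check exposes it. In your $r=3$ sketch every $0$-vertex is protected by two vertices labelled $k$, so the requirement $2k\ge k+2$ forces $k\ge2$. This is not just a defect of your sketch; the case $m=3t$ of the statement is false for $k=1$:
\begin{itemize}
\item $\gamma_{[1]R}$ is the ordinary Roman domination number $\gamma_R$.
\item The standard bound $\gamma_R(G)\ge 2|V(G)|/(\Delta+1)$, with $|V|=3tn$ and $\Delta=4$, gives $\gamma_R(C_{3t}\Box P_n)\ge 6tn/5$.
\item This exceeds the claimed $t(n+2)$ as soon as $n\ge 11$.
\end{itemize}
So that case needs the hypothesis $k\ge2$, and you should add it. The paper's proof only cites the base bound and has the same omission; it even uses the $r=3$ slope for $k=1$ later.
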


\begin{proof}
We partition the cycle of $C_m\Box P_n$ into $t$ consecutive blocks of size
$3$, $4$, $\ldots$, or $9$, according to the value of $m$.

\smallskip
\noindent
Let $m=rt$, where $r\in\{3,4,\ldots,9\}$.
Partition the cycle of $C_m\Box P_n$ into $t$ consecutive blocks, each inducing a copy of $C_r\Box P_n$.
On every such block we apply the corresponding construction for $C_r\Box P_n$.

\smallskip
\noindent
For $r=9$, the construction is given in the present paper, while for
$r\in\{3,\dots,8\}$ we use the constructions established in our earlier work
on cylindrical grids $C_r\Box P_n$ (see~\cite{BrezovnikZerovnik2026_1,BrezovnikZerovnikkRomanCylindrical2}).

\smallskip
\noindent
Since each construction is defined locally within a block and depends only on adjacency relations inside that block, it remains valid when repeated periodically along the cycle.
Moreover, the blocks are disjoint in the cycle direction, so the total weight equals the sum of the contributions of all $t$ blocks.

\smallskip
\noindent
Consequently,
\[
\gamma_{[k]R}(C_m\Box P_n)
\le
t \cdot \gamma_{[k]R}(C_r\Box P_n),
\]
which yields the stated bounds for $m\in\{3t,\ldots,9t\}$ by substituting the corresponding estimates for $C_r\Box P_n$.
\end{proof}

As observed above, the same cylindrical graph $C_m\Box P_n$ may admit several distinct decompositions into blocks of admissible sizes $r\in\{3,4, \ldots,9\}$.
Each such decomposition yields a valid upper bound obtained by repeating the corresponding base construction.
Since these bounds are not necessarily equal, it is natural to compare them in order to determine which decomposition provides the best estimate.
\smallskip

\noindent
To compare these bounds, we consider their asymptotic behavior as functions of $n$ for fixed $k$.
More precisely, we determine the linear coefficient (slope) with respect to $n$ in each case. If $m=rt$, then each Bound is of the form $t\cdot B_r(n,k)$, and hence its slope equals $\frac{m}{r}$ times the slope of the corresponding base Bound for $C_r\Box P_n$.

\smallskip
\noindent
From the expressions in Theorem~\ref{thm:linear_multiples_unified}, we obtain the following slopes:
\[
\begin{aligned}
r=3&:\ \frac{mk}{3}, &
r=4&:\ \frac{m(k+1)}{4}, &
r=5&:\ \frac{m(k+1)}{5}, \\[2pt]
r=6&:\ \frac{2m(k+1)}{9}, &
r=7&:\ \frac{3m(k+1)}{14}, &
r=8&:\ \frac{m(5k+4)}{20}, \\[2pt]
r=9&:\ \frac{2m(k+1)}{9}.
\end{aligned}
\]

\smallskip
\noindent
Since the factor $m$ is common to all expressions, it suffices to compare the normalized slopes
\[
\frac{k}{3},\quad
\frac{k+1}{4},\quad
\frac{k+1}{5},\quad
\frac{2(k+1)}{9},\quad
\frac{3(k+1)}{14},\quad
\frac{5k+4}{20}.
\]

\smallskip
\noindent
A direct comparison shows that for all $k\ge 2$ we have
\[
\frac{k+1}{5}
<
\min\left\{
\frac{k}{3},\,
\frac{k+1}{4},\,
\frac{2(k+1)}{9},\,
\frac{3(k+1)}{14},\,
\frac{5k+4}{20}
\right\}.
\]
Hence, among all admissible constructions, the decomposition with $r=5$
(i.e., $m=5t$) yields the smallest slope for every $k\ge 2$.

\smallskip
\noindent
For $k=1$, however, we have
\[
\frac{k}{3}=\frac{1}{3}
<
\frac{2}{5}=\frac{k+1}{5},
\]
so in this case the decomposition with $r=3$ gives the smallest slope.
\smallskip

\noindent
Consequently, for sufficiently large $n$, the best linear bound is obtained
from the decomposition $m=5t$ whenever $k\ge 2$, and from $m=3t$ when $k=1$.

\smallskip
\noindent
The case $k=1$ corresponds to classical Roman domination and has already been
extensively studied in the literature \cite{Chang1992,Alanko,Kragujevac}.
We therefore focus on the general case $k\ge 2$ and use the above observations
to derive upper bounds for arbitrary cylindrical graphs $C_m\Box P_n$.

Motivated by the fact that the decomposition $m=5t$ yields the smallest slope
for all $k\ge 2$, we take the construction for $C_{5t}\Box P_n$ as a starting point
and extend it to general values of $m$.
More precisely, we modify the periodic pattern of width $5$ by locally removing
certain rows and adjusting the remaining labels so that the $[k]$-Roman domination
condition is preserved.
In this way, we obtain constructions for all residue classes of $m$ modulo $5$,
which allows us to derive upper bounds for all cylindrical graphs $C_m\Box P_n$. This leads to the following theorem.

\begin{theorem}\label{thm:mod5-final}
Let $k\ge 2$ and $n\ge 4$. For $m\ge 3$,
\[
\gamma_{[k]R}(C_m\Box P_n)
\le
\begin{cases}
\displaystyle
\frac{m}{5}\bigl(n(k+1)+2k\bigr),
& m\equiv 0 \pmod{5},\\[10pt]
\displaystyle
\frac{m+4}{5}\bigl(n(k+1)+2k\bigr)
-(4k+1)-2(k+2)\left\lfloor\frac{n-2}{5}\right\rfloor,
& m\equiv 1 \pmod{5},\\[10pt]
\displaystyle
\frac{m+3}{5}\bigl(n(k+1)+2k\bigr)
-3k-(k+3)\left\lfloor\frac{n-2}{5}\right\rfloor,
& m\equiv 2 \pmod{5},\\[10pt]
\displaystyle
\frac{m+2}{5}\bigl(n(k+1)+2k\bigr)
-2k-2\left\lfloor\frac{n-2}{5}\right\rfloor,
& m\equiv 3 \pmod{5},\\[10pt]
\displaystyle
\frac{m+1}{5}\bigl(n(k+1)+2k\bigr)
-2k,
& m\equiv 4 \pmod{5}.
\end{cases}
\]
\end{theorem}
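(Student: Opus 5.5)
Each case will be proved by an explicit labeling built from Pattern~(\ref{matrika5a}) with its end corrections as in~(\ref{matrika5b}). Write $m=5q+s$ with $s\in\{0,1,2,3,4\}$. For $s=0$ the bound is exactly the extension principle applied with $r=5$, $t=m/5$, using the weight $n(k+1)+2k$ of the $C_5\Box P_n$ construction. For $s\neq 0$ we round up: take $\lceil m/5\rceil=(m+5-s)/5$ copies of the width-five pattern stacked along the cycle, i.e.\ the $[k]$-RDF of $C_{5\lceil m/5\rceil}\Box P_n$ of weight $\frac{m+5-s}{5}(n(k+1)+2k)$. We then delete $5-s$ consecutive rows, i.e.\ $5-s$ consecutive vertices of each fibre, and glue the two rows that become adjacent. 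Every remaining vertex keeps its neighbours, except the two rows at the seam. So only those two rows, together with the deleted weight, need to be controlled.

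Step two is to choose, for each residue, which $5-s$ rows to delete inside one block. Along the $P_n$ direction, each row of Pattern~(\ref{matrika5a}) carries a $k+1$ once every five fibres. Deleting $5-s$ rows therefore removes roughly $(5-s)\cdot n/5$ vertices of weight $k+1$. Some of these must be reinstated, in a modified form, on the seam rows. The claimed savings $2(k+2)\lfloor (n-2)/5\rfloor$, $(k+3)\lfloor (n-2)/5\rfloor$ and $2\lfloor (n-2)/5\rfloor$ are per-period gains: over each full period of five interior fibres (the first and last fibres being patched), deleting $5-s$ rows saves $(5-s)(k+1)$ but costs some repair weight. For instance, for $s=4$ we delete one row, and the net gain per period is zero, since the removed $k+1$ must be placed on a seam vertex. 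For $s=3$ the gain is $2$ per period: two $k+1$'s are removed and replaced by two labels of value $k$, or by one $k+1$ and some $1$'s. I will pick the deleted rows and the shift of the block next to the seam, using the cyclic shift freedom of the next copy, so that each seam vertex labeled $0$ or $1$ has a neighbour labeled $k+1$, or has enough total weight, in the merged neighbourhood. Where this fails, I will raise a seam vertex to $k$ or $k+1$, or add a label $1$, and then count the cost per period. The constant terms $-(4k+1)$, $-3k$, $-2k$, $-2k$ come from the boundary fibres. The end patches $\mathbf{k}$ of~(\ref{matrika5b}) on deleted rows disappear, and possibly one patch on a seam row is merged, yielding exact constants such as $-(4k+1)$ for $s=1$.

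The verification then reduces to a finite check. Because every construction is periodic with period $5$ in $j$, it suffices to check the $[k]$-condition $f(N[v])\ge k+|AN(v)|$ for vertices $v$ on the two seam rows and their neighbours, over one period of interior fibres, plus the first and last fibres. The leftover $n-2-5\lfloor (n-2)/5\rfloor$ fibres near the end are treated with the unmodified width-five labeling, possibly with an extra patch, so that no saving is claimed there. This explains the $\lfloor (n-2)/5\rfloor$ count, and the hypothesis $n\ge 4$ guarantees room for the two end patches.

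The main obstacle is the case $s=1$, where four rows are deleted and a single surviving row of the block must sit between two full blocks. Here I have to find a labeling of that lone row, with its seam neighbours adjusted, costing exactly $(k+1)n/5\cdot 5-2(k+2)$ per period beyond the saving, i.e.\ net saving $2(k+2)$ per five fibres. I would first try replacing the four deleted rows' $k+1$ entries by two entries of value $k+1$ placed in the lone row at fibres where the adjacent blocks leave vertices undominated, and then check whether the remaining deficits can be covered by labels $1$. I expect the exact bookkeeping here, and the matching constant $4k+1$, to require trial of several row choices. The cases $s=2,3$ are analogous but with more slack.
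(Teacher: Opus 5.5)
Your strategy is the one the paper uses. You take $\lceil m/5\rceil$ stacked copies of Pattern~\eqref{matrika5a}, delete $5-s$ consecutive rows, glue, and repair near the seam. The paper's proof, however, rests on explicit labelings for every pair of residues of $m$ and $n$ modulo $5$ (Appendix~A), and your proposal contains none of them. The step you postpone (``I will pick the deleted rows\dots'', ``trial of several row choices'') is the whole content of the theorem, so as it stands this is a plan, not a proof.

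The interior part can be settled directly, and you should do so. The stacked pattern is the perfect code $\{(i,j): j+2i\equiv c \pmod 5\}$, so after the deletion exactly one vertex per period in each of the two seam rows loses its unique dominator, and no other vertex is affected. The repair cost depends on $s$:
\begin{itemize}
\item for $s=1,2$ the two defects have a common neighbour, so one label $k+1$ per period fixes both;
\item for $s=3$ they are at distance $3$ and need two labels $k$;
\item for $s=4$ they are adjacent and one $k+1$ suffices.
\end{itemize}
This gives per-period savings of at least $2(k+2)$, $k+3$, $2$ and $0$, respectively. Your budget sentence for $s=1$ is garbled: the allowed repair cost there is $4(k+1)-2(k+2)=2k$ per period. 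So adding two new entries $k+1$ on top of the lone row's own entries already exceeds the budget by $2$ per period and fails for large $n$.

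The genuine gap is at the ends of the path, where the exact constants $4k+1$, $3k$, $2k$, $2k$ must be obtained for every $n\ge 4$. Your explanation of them does not work. The $+1$ in $4k+1$ cannot come from deleting or merging end patches of weight $k$. Also, with the deletion-and-repair scheme there is no interior slack for $s=3,4$, so every unit has to be found in the first and last fibres.

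What the constants actually record is how many $(k+1)$-entries of the deleted rows, and how many seam defects, fall into the incomplete period, and how seam defects interact with the surviving end patches. For example, a defect adjacent to a patch can be fixed by raising that patch from $k$ to $k+1$. A repairing $k+1$ next to a patched vertex can make that patch unnecessary. All of this depends on $n\bmod 5$ and on the choice of the phase $c$.

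Moreover, the leftover fibres cannot be ``treated with the unmodified width-five labeling''. The rows are deleted along the whole cylinder, so the seam defects persist there too. To complete the argument you must fix, for each residue pair $(m\bmod 5,\,n\bmod 5)$, the phase, the deleted rows and every end modification, and then verify both the $[k]$-condition and the exact weight. This is precisely what the paper's Appendix~A supplies.
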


\begin{proof}
For \(m\equiv 0 \pmod 5\), the claim follows directly from
Theorem~\ref{thm:linear_multiples_unified}.

\smallskip
\noindent
Assume now that \(m\not\equiv 0 \pmod 5\).
In each residue class, we obtain a labeling on \(C_m\Box P_n\) from the periodic
labeling on the next larger cylinder \(C_{m+r}\Box P_n\), where
\(r\in\{1,2,3,4\}\) is chosen so that \(m+r\equiv 0\pmod 5\).
We then delete \(5-r\) consecutive rows and modify the labels only in a bounded
neighborhood of the deleted rows. The constructions for the residue classes of $m,n$ modulo $5$ are given in Appendix A.

\smallskip
\noindent
In each case, only the vertices in the neighborhood of the deleted rows are
affected, while the remaining part of the labeling stays periodic.
The local corrections guarantee that every zero-labeled
vertex still receives desired total weight from its closed neighborhood.
Thus the resulting labeling is a valid \([k]\)-Roman dominating function on
\(C_m\Box P_n\), and the stated upper bounds follow.
\end{proof}

\smallskip
\textbf{Uniform constructions}
\smallskip

\noindent
The uniform ceiling-type constructions obtained for the base cases $m=3,\ldots,9$
can likewise be extended to general cylindrical graphs.
By repeating the corresponding periodic pattern along the cycle direction, we obtain a unified upper bound of the following form.

\begin{theorem}\label{thm:uniform_multiples_unified}
For $n\ge4$,
\[
\gamma_{[k]R}(C_m\Box P_n)
\le
m (n-2)\left\lceil\frac{k+4}{5}\right\rceil
+
2m\left\lceil
\frac{k+3-\left\lceil\frac{k+4}{5}\right\rceil}{3}
\right\rceil.
\]
\end{theorem}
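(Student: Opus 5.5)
The plan is to repeat the uniform construction from the proof of Theorem~\ref{thm:C9Pn_uniform} verbatim for an arbitrary circumference $m\ge 3$. The point is that this construction never used $m=9$: it only used local degree information. Set
\[
a=\left\lceil\frac{k+4}{5}\right\rceil,\qquad b=\left\lceil\frac{k+3-a}{3}\right\rceil .
\]
Define $f$ by $f(v)=a$ for every vertex $v$ of an interior fibre $F_j$, $1\le j\le n-2$, and $f(v)=b$ for every vertex of the boundary fibres $F_0$ and $F_{n-1}$. Then the total weight is $m(n-2)a+2mb$, which is exactly the claimed bound. So it remains to check that $f$ is a $[k]$-RDF and that its labels lie in $\{0,\dots,k+1\}$.

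\textbf{Label range.} We have $a\le k+1$ trivially. Also $b\ge 0$, since $a\le k+3$ for $k\ge1$, and $b\le k+1$. In particular $f$ is admissible.

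\textbf{Interior vertices.} Since $m\ge3$, the cycle $C_m$ is a simple cycle, so each vertex has two distinct neighbours inside its fibre. An interior vertex $v$ therefore has exactly four neighbours and $|N[v]|=5$. Using $|AN(v)|\le 4$, we get
\[
f(N[v])\ge 5a\ge k+4\ge k+|AN(v)|.
\]
This already covers $n\ge4$, because then every interior vertex has both adjacent fibres interior or boundary. In either case each neighbour carries at least $\min\{a,b\}$, and we need care here.

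\textbf{Main obstacle.} The delicate step is the interior vertices of $F_1$ and $F_{n-2}$: one neighbour lies in a boundary fibre and carries $b$ rather than $a$. I would argue $b\ge a$ whenever it matters. Indeed, $b\ge (k+3-a)/3$, and $a\le (k+8)/5$ gives $b\ge (4k+7)/15$. For $k\ge 2$ this is at least $a-1$ in general, so I would handle it by direct comparison instead. The needed sum is
\[
4a+b\ge k+4,
\]
and since $a\ge (k+4)/5$ it suffices to check $b\ge (k+4)/5$. This holds because $(k+3-a)/3\ge(k+4)/5$ is equivalent to $2k+3\ge 5a$, which is true as $5a\le k+8$ once $k\ge5$. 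Small $k$ I would verify by hand; e.g. for $k=1,2$ one gets $a=1$, $b=1$, and $4+1=5\ge k+4$. If some small case fails, I would adjust exactly as the paper implicitly does for $m=9$, since the argument is identical.

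\textbf{Boundary vertices.} A boundary vertex $v$ has three neighbours: two in its own fibre and one in the adjacent interior fibre. Hence
\[
f(N[v])=3b+a\ge (k+3-a)+a=k+3\ge k+|AN(v)|.
\]

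Combining the three cases shows that $f$ is a $[k]$-RDF of the stated weight. This establishes the bound for all $m\ge3$, independently of any divisibility of $m$.
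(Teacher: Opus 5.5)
Your construction and case analysis are the same as the paper's. You put weight $a$ on the interior fibres and $b$ on $F_0$ and $F_{n-1}$, and you make three checks: $5a\ge k+4$ on $F_2,\dots,F_{n-3}$, $4a+b\ge k+4$ on $F_1$ and $F_{n-2}$, and $a+3b\ge k+3$ on the boundary fibres.

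The paper only asserts $4a+b\ge k+4$, and your attempt to justify it is where the proposal fails.

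\begin{itemize}
\item Splitting it as $4a\ge 4(k+4)/5$ plus $b\ge (k+4)/5$ works only for $k\ge5$.
\item Your hand check of the remaining cases is wrong. For $k=2$ one has $a=\lceil 6/5\rceil=2$, not $1$. With your values the displayed inequality reads $5\ge 6$, which is false.
\item The cases $k=3,4$ are not checked at all.
\item ``If some small case fails, I would adjust'' is not an argument. The paper makes no such adjustment in the $m=9$ proof, so there is nothing there to imitate.
\item The remarks ``$b\ge a$ whenever it matters'' and ``at least $a-1$'' are never used and should be dropped.
\end{itemize}

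The inequality in fact holds for every $k\ge1$, and the fix is short: do not separate $a$ and $b$. Using only $b\ge (k+3-a)/3$,
\[
4a+b\ \ge\ 4a+\frac{k+3-a}{3}\ =\ \frac{11a+k+3}{3}.
\]
This is at least $k+4$ if and only if $11a\ge 2k+9$. That follows from $11a\ge 11(k+4)/5$ together with $11(k+4)/5\ge 2k+9$, which is equivalent to $k\ge1$.

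You can also check the small cases directly. For $k=1,2,3,4$ one gets $(a,b)=(1,1),(2,1),(2,2),(2,2)$, giving $5\ge5$, $9\ge6$, $10\ge7$ and $10\ge8$.

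With this replacement your argument is complete and coincides with the paper's proof.
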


\begin{proof}
Let
\[
a=\left\lceil\frac{k+4}{5}\right\rceil
\qquad\text{and}\qquad
b=\left\lceil
\frac{k+3-a}{3}
\right\rceil.
\]
We assign weight $a$ to every vertex in each interior fibre
$F_i$, $1\le i\le n-2$.
On the two boundary fibres $F_0$ and $F_{n-1}$, we assign weight $b$ to every vertex.

\smallskip
\noindent
We first verify that this labeling is a $[k]$-RDF.
Let $v$ be an interior vertex.
If $v$ belongs to a fibre $F_i$ with $2\le i\le n-3$, then all vertices in $N[v]$
have weight $a$, and hence
\[
f(N[v])=5a\ge k+4.
\]

If $v$ belongs to $F_1$ or $F_{n-2}$, then one of its neighbors lies in a boundary fibre
and therefore has weight $b$, while the remaining vertices in $N[v]$ have weight $a$.
Thus
\[
f(N[v])\ge 4a+b.
\]
Since
\[
4a+b\ge k+4,
\]
every interior vertex satisfies the required condition.

\smallskip
\noindent
For a boundary vertex $v$, its closed neighborhood contains the vertex itself,
its two neighbors on the cycle in the same fibre, and one neighbor in the adjacent interior fibre.
Hence
\[
f(N[v])\ge a+3b.
\]
By the definition of $b$, we have
\[
3b\ge k+3-a,
\]
and therefore
\[
f(N[v])\ge a+(k+3-a)=k+3.
\]
Since a boundary vertex has at most three neighbors, the $[k]$-Roman domination
condition also holds at every boundary vertex.

\smallskip
\noindent
Thus the constructed labeling is a valid $[k]$-RDF on $C_m\Box P_n$.
Its total weight is the sum of the contributions of the interior and boundary fibres.
There are $n-2$ interior fibres, each containing $m$ vertices of weight $a$,
so their total contribution equals
\[
m(n-2)a.
\]
The two boundary fibres together contain $2m$ vertices of weight $b$,
so their total contribution equals
\[
2mb.
\]
Consequently,
\[
\gamma_{[k]R}(C_m\Box P_n)
\le
m(n-2)\left\lceil\frac{k+4}{5}\right\rceil
+
2m\left\lceil
\frac{k+3-\left\lceil\frac{k+4}{5}\right\rceil}{3}
\right\rceil,
\]
as claimed.
\end{proof}

\smallskip
\textbf{Packing constructions}
\smallskip

\noindent
The packing-based refinements developed in the previous subsections can be extended to larger cylindrical graphs.
The key idea is that vertices forming a packing can have their weights reduced simultaneously without violating the $[k]$-Roman domination condition, which leads to an improvement of the basic ceiling-type upper bound.
This yields the following general estimate.

\begin{theorem}\label{thm:packing_multiples_unified}
For $n\ge 4$,
\[
\gamma_{[k]R}(C_m\Box P_n)
\le
m (n-2)\left\lceil\frac{k+5}{5}\right\rceil
+
2m\left\lceil
\frac{k+3-\left\lceil\frac{k+5}{5}\right\rceil}{3}
\right\rceil
-
\rho(C_m\Box P_n),
\]
where $\rho(C_m\Box P_n)$ denotes the packing number of $C_m\Box P_n$.
\end{theorem}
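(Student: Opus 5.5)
The plan is to repeat the argument of Theorem~\ref{thm:C9Pn_packing} with $C_9$ replaced by $C_m$. Put
\[
a'=\left\lceil\frac{k+5}{5}\right\rceil \qquad\text{and}\qquad b'=\left\lceil\frac{k+3-a'}{3}\right\rceil .
\]
Start from the uniform labeling $f_0$ of Theorem~\ref{thm:uniform_multiples_unified}, but with $a'$ in place of $a$. So every vertex of an interior fibre $F_i$, $1\le i\le n-2$, gets $a'$, and every vertex of $F_0$ and $F_{n-1}$ gets $b'$. Its weight is exactly
\[
m(n-2)a'+2mb' .
\]
Then take a maximum packing $S$ of $C_m\Box P_n$, so $|S|=\rho(C_m\Box P_n)$, and define $f=f_0-\mathbf 1_S$. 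This gives the claimed weight. It remains to show that $f$ is a $[k]$-RDF.

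\textbf{Key observation.} Since closed neighbourhoods of distinct vertices of $S$ are disjoint, every closed neighbourhood contains at most one vertex of $S$. Indeed, $w_1,w_2\in S\cap N[v]$ would give $v\in N[w_1]\cap N[w_2]$. Hence $f(N[v])\ge f_0(N[v])-1$ for every vertex $v$. Moreover, $|AN_f(v)|\le |AN_{f_0}(v)|$, because lowering a label can only remove vertices from the active neighbourhood. It therefore suffices to prove the slack inequality
\[
f_0(N[v])\ge k+\deg(v)+1 \qquad\text{for every } v .
\]

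\textbf{Checking the slack.} Go through the vertex types as in the proof of Theorem~\ref{thm:uniform_multiples_unified}.
\begin{itemize}
\item For $v\in F_i$ with $2\le i\le n-3$ we have $f_0(N[v])=5a'\ge k+5$.
\item For $v\in F_1\cup F_{n-2}$ we need $4a'+b'\ge k+5$. Using $a'\le (k+9)/5$ and $b'\ge (k+3-a')/3$ gives $4a'+b'\ge (16k+66)/15$. This is at least $k+5$ once $k\ge 9$, and the finitely many $k\le 8$ are checked directly from the values of $a',b'$ (e.g.\ $k=2$ gives $a'=2$, $b'=1$, $4a'+b'=9\ge 7$).
\item For boundary vertices of degree $3$, the definition of $b'$ only gives $a'+3b'\ge k+3$. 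However, we need $k+4$.
\end{itemize}

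\textbf{Main obstacle: the boundary fibres.} Here I would first split according to the residue of $k$ modulo $15$ and check when the ceilings already produce $a'+3b'\ge k+4$. In the remaining cases there is exact equality $a'+3b'=k+3$, and I would use the local effect of the packing. If the packing vertex $w\in N[v]$ has $b'=1$ (or $a'=1$), then $w$ becomes $0$ and $|AN(v)|$ drops by one, which compensates the lost unit. If instead $f(v)\ge k$, the condition at $v$ is void. If neither mechanism applies, I would modify the choice of $S$ (or raise the boundary label by one on the few affected boundary vertices) so that no packing vertex lies in the closed neighbourhood of a tight boundary vertex. In the statement's bookkeeping, this would have to be absorbed by choosing a maximum packing avoiding $F_0\cup F_1\cup F_{n-2}\cup F_{n-1}$ in the tight cases. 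I expect this boundary analysis to be the delicate part. Everything in the interior is routine once the observation that each $N[v]$ meets $S$ at most once is in place.
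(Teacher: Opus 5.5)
Your construction is exactly the paper's: interior fibres get $a'$, end fibres get $b'$, you subtract the indicator of a maximum packing $S$, and you use that each closed neighbourhood meets $S$ at most once. Your interior checks are correct. You have also located a real problem.

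To survive a unit reduction, an end-fibre vertex needs $f_0(N[v])\ge k+4$. The inequality $a'+3b'\ge k+4$ holds iff $k\not\equiv a'\pmod 3$, so it fails precisely for $k\equiv 2,5,6,9,13\pmod{15}$, where $a'+3b'=k+3$. For all other $k$ your proof is complete.

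The paper's proof does not resolve this. It asserts that the sums exceed the requirement ``by at least one unit at the relevant vertices'', which is false at end-fibre vertices in these residues. For $k=2$ ($a'=2$, $b'=1$), a packing vertex $v\in F_0$ drops to $0$ while its three neighbours stay active, giving $f(N[v])=4<5=k+|AN(v)|$. A packing vertex in $F_1$ drops to $1$ and leaves the end vertex above it with the same deficit.

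None of your proposed repairs closes this gap.
\begin{itemize}
\item The drop in $|AN(v)|$ only occurs when the reduced vertex is a cycle-neighbour of $v$ and $b'=1$, which among these residues means only $k=2$. It does nothing when the reduced vertex is $v$ itself, since the active neighbourhood is open. It also does nothing when the reduced vertex lies in $F_1$: $a'\ge 2$ always, so the case $a'=1$ never arises.
\item The exemption $f(v)\ge k$ never applies, because $b'<k$ for $k\ge 2$.
\item A maximum packing cannot avoid $F_0\cup F_1$. If a packing $S'$ does, pick $p$ not a position of $S'\cap F_2$; this is possible since $|S'\cap F_2|\le\lfloor m/3\rfloor<m$. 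Adding $(p,0)$ then gives a larger packing.
\item Raising labels near the ends adds weight, and the claimed bound has no room for it.
\end{itemize}

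What your argument does give in these residues is a weaker bound: either replace $\rho(C_m\Box P_n)$ by the size of a largest packing contained in $F_2\cup\dots\cup F_{n-3}$, or keep a full maximum packing and replace $b'$ by $\lceil (k+4-a')/3\rceil$.

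Proving the inequality as stated in these residues would need a genuinely different treatment of the end fibres. It cannot simply be borrowed from elsewhere in the paper: for $k=20$ and $m=3$, every other bound in the paper is strictly larger than the claimed one.
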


\begin{proof}
Let
\[
a'=\left\lceil\frac{k+5}{5}\right\rceil
\qquad\text{and}\qquad
b'=\left\lceil
\frac{k+3-a'}{3}
\right\rceil.
\]
We define a labeling $f$ by assigning weight $a'$ to every vertex of each interior fibre
$F_i$, $1\le i\le n-2$, and weight $b'$ to every vertex of the two boundary fibres.

\smallskip
\noindent
Let $v$ be an interior vertex. If $v$ belongs to a fibre $F_i$ with
$2\le i\le n-3$, then all vertices in $N[v]$ have weight $a'$, and hence
\[
f(N[v])=5a'\ge k+5.
\]
If $v$ belongs to $F_1$ or $F_{n-2}$, then one of its neighbors lies in a boundary fibre
and has weight $b'$, while the remaining four vertices in $N[v]$ have weight $a'$.
Thus
\[
f(N[v])\ge 4a'+b'.
\]
Since
\[
4a'+b'\ge k+5,
\]
it follows that every interior vertex satisfies the required condition.

On the other hand, for every boundary vertex $v$,
\[
f(N[v])\ge a'+3b'\ge k+3.
\]
Thus the neighborhood sums exceed the minimum requirement by at least one unit
at the relevant vertices, which creates the necessary slack.

\smallskip
\noindent
Let $S$ be a maximum packing of $C_m\Box P_n$.
By definition, the closed neighborhoods of distinct vertices of $S$ are pairwise disjoint.
Therefore, if we decrease the weight of every vertex of $S$ by one, then each closed
neighborhood is affected by at most one such reduction.
Since the original labeling had one extra unit of slack, the $[k]$-Roman domination
condition remains satisfied after the reduction.

\smallskip
\noindent
Hence the modified labeling is still a $[k]$-RDF.
Its total weight is obtained from the original ceiling-type construction by subtracting
exactly one for each vertex of the packing set $S$.
The original construction has total weight
\[
m(n-2)a' + 2mb',
\]
and the reduction contributes
\[
|S|=\rho(C_m\Box P_n).
\]
Consequently,
\[
\gamma_{[k]R}(C_m\Box P_n)
\le
m (n-2)\left\lceil\frac{k+5}{5}\right\rceil
+
2m\left\lceil
\frac{k+3-\left\lceil\frac{k+5}{5}\right\rceil}{3}
\right\rceil
-
\rho(C_m\Box P_n),
\]
which proves the theorem.
\end{proof}


\smallskip
\textbf{\Large{Upper bounds for the double Roman domination number of $C_m\Box P_n$}}
\smallskip

\noindent
For small values of $k$, numerical comparison of all derived bounds
shows that the linear constructions consistently provide the smallest
upper estimates among the three families (linear, uniform, and packing-based).
In particular, for $k=2$ the linear bounds dominate in all relevant
parameter ranges considered above.
Therefore, we state explicitly only the consequence of the linear
construction for the double Roman domination number.

For circumferences that are multiples of $3,\ldots,9$, the linear constructions
from Theorem~\ref{thm:linear_multiples_unified} yield the following bounds.

\begin{corollary}\label{cor:double-roman-multiples}
Let $t\ge1$ and $n\ge2$. Then the following bounds hold:
\[
\gamma_{[2]R}(C_m\Box P_n)
\le
\begin{cases}
\displaystyle
t(2n+2),
& m=3t,\\[8pt]
\displaystyle
3tn,
& m=4t,\\[8pt]
\displaystyle
t(3n+4),
& m=5t,\\[10pt]
\displaystyle
t\Bigl(3\left\lceil \frac{4n}{3}\right\rceil+3\Bigr),
& m=6t,\ n\equiv 0 \pmod{3},\\[10pt]
\displaystyle
3t\left\lceil \frac{4n}{3}\right\rceil,
& m=6t,\ n\equiv 1 \pmod{3},\\[10pt]
\displaystyle
t\Bigl(3\left\lceil \frac{4n}{3}\right\rceil+2\Bigr),
& m=6t,\ n\equiv 2 \pmod{3},\\[12pt]
\displaystyle
t\frac{9n+11}{2},
& m=7t,\ n\text{ odd},\\[10pt]
\displaystyle
t\frac{9n+10}{2},
& m=7t,\ n\text{ even},\\[12pt]
\displaystyle
t\left(
6n
-
\left(
\left\lfloor\frac{n-2}{5}\right\rfloor
+
\left\lfloor\frac{n}{5}\right\rfloor
\right)
+4
\right),
& m=8t,\\[12pt]
\displaystyle
t(6n+4),
& m=9t.
\end{cases}
\]
\end{corollary}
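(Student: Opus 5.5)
The corollary is the specialization $k=2$ of Theorem~\ref{thm:linear_multiples_unified}, so I would substitute $k=2$ into each of its ten cases and simplify. No new construction is needed. The $[2]$-Roman dominating functions are exactly the block-periodic labelings used there: $t$ copies of the base construction on $C_r\Box P_n$, repeated along the cycle. Each copy has labels in $\{0,1,2,3\}$, and the bound $\gamma_{[k]R}(C_{rt}\Box P_n)\le t\,\gamma_{[k]R}(C_r\Box P_n)$ holds for every $k$, in particular for $k=2$.

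With $k+1=3$ and $2k=4$, the cases reduce as follows.
\begin{itemize}
\item $m=3t$: $t(nk+2)=t(2n+2)$.
\item $m=4t$: $tn(k+1)=3tn$.
\item $m=5t$: $t(3n+4)$.
\item $m=6t$: the three residue classes of $n$ modulo $3$ give $t\bigl(3\lceil 4n/3\rceil+3\bigr)$, $3t\lceil 4n/3\rceil$ and $t\bigl(3\lceil 4n/3\rceil+2\bigr)$, the last using the additive term $k=2$.
\item $m=8t$: $t\bigl(6n-(\lfloor (n-2)/5\rfloor+\lfloor n/5\rfloor)+4\bigr)$.
\item $m=9t$: $t(6n+4)$.
\end{itemize}

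The only step needing care is $m=7t$. For odd $n$ the theorem gives
\[
3(n+1)+\tfrac{3(n-1)}{2}+4=\tfrac{6n+6+3n-3+8}{2}=\tfrac{9n+11}{2},
\]
which is an integer since $9n+11$ is even. For even $n$ it gives
\[
3n+\tfrac{3n}{2}+5=\tfrac{9n+10}{2},
\]
again an integer. In both cases the result is multiplied by $t$.

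The main risk is purely arithmetic. I would check these two fractions against the stated forms, and confirm that the assumptions $t\ge1$, $n\ge2$ coincide with those of Theorem~\ref{thm:linear_multiples_unified}, so no extra range restrictions arise.
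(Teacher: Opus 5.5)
Your proposal is correct and takes the same route as the paper: the corollary is obtained directly by setting $k=2$ in Theorem~\ref{thm:linear_multiples_unified}. Your case-by-case simplifications, including the two $m=7t$ fractions $\frac{9n+11}{2}$ and $\frac{9n+10}{2}$, all check out, and the hypotheses $t\ge1$, $n\ge2$ are the same as the theorem's.
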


\smallskip
\noindent
On the other hand, substituting $k=2$ into Theorem~\ref{thm:mod5-final}
yields a residue-class bound that applies to all cylindrical grids.

\begin{corollary}\label{cor:double-roman-mod5}
Let $m\ge3$ and $n\ge4$. Then
\[
\gamma_{[2]R}(C_m\Box P_n)
\le
\begin{cases}
\dfrac{m}{5}(3n+4),
& m\equiv0\pmod5,\\[10pt]
\dfrac{m+4}{5}(3n+4)-9-8\left\lfloor\dfrac{n-2}{5}\right\rfloor,
& m\equiv1\pmod5,\\[10pt]
\dfrac{m+3}{5}(3n+4)-6-5\left\lfloor\dfrac{n-2}{5}\right\rfloor,
& m\equiv2\pmod5,\\[10pt]
\dfrac{m+2}{5}(3n+4)-4-2\left\lfloor\dfrac{n-2}{5}\right\rfloor,
& m\equiv3\pmod5,\\[10pt]
\dfrac{m+1}{5}(3n+4)-4,
& m\equiv4\pmod5.
\end{cases}
\]
\end{corollary}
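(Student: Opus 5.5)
This corollary is a direct specialization of Theorem~\ref{thm:mod5-final} to $k=2$, so the plan is to substitute $k=2$ into each of the five residue-class cases and simplify. The hypotheses match: Theorem~\ref{thm:mod5-final} requires $k\ge 2$, $n\ge 4$, and $m\ge 3$, and the corollary assumes $m\ge3$ and $n\ge4$ with $k=2$. Hence no new construction is needed. The validity of the underlying $[2]$-Roman dominating functions is inherited from the appendix constructions used in the proof of Theorem~\ref{thm:mod5-final}.

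With $k=2$ the common factor becomes $n(k+1)+2k=3n+4$. Going case by case:
\begin{itemize}
\item For $m\equiv0\pmod5$ we get $\frac m5(3n+4)$.
\item For $m\equiv1\pmod5$ the correction terms become $4k+1=9$ and $2(k+2)=8$, giving $\frac{m+4}{5}(3n+4)-9-8\lfloor\frac{n-2}{5}\rfloor$.
\item For $m\equiv2\pmod5$ we have $3k=6$ and $k+3=5$.
\item For $m\equiv3\pmod5$ we have $2k=4$, and the coefficient $2$ of the floor term is independent of $k$.
\item For $m\equiv4\pmod5$ we have $2k=4$.
\end{itemize}
Each of these agrees with the displayed expression.

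The only point needing any care is confirming that the $k$-dependent constants in Theorem~\ref{thm:mod5-final} are evaluated correctly, in particular that $2(k+2)$ gives $8$ and $(k+3)$ gives $5$. This is routine arithmetic, so there is no genuine obstacle.
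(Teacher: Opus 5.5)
Your proposal is correct and matches the paper, which obtains this corollary simply by substituting $k=2$ into Theorem~\ref{thm:mod5-final}. Your evaluation of the constants ($n(k+1)+2k=3n+4$, $4k+1=9$, $2(k+2)=8$, $3k=6$, $k+3=5$, $2k=4$) is right in every residue class.
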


\smallskip
\noindent
It is natural to compare Corollaries~\ref{cor:double-roman-multiples}
and~\ref{cor:double-roman-mod5}. The first one is based on constructions
tailored to special circumferences, whereas the second one applies to all
cylindrical grids. Hence the residue-class bound of
Corollary~\ref{cor:double-roman-mod5} has a wider range of applicability,
but it is not always numerically smaller for small values of $m$.

\smallskip
\noindent
To determine which estimate is stronger, we compare the linear
coefficients with respect to $n$. For large $n$, the dominant term
determines which construction provides the better bound.


\begin{theorem}\label{thm:double-roman-asymptotic-comparison}
Let $m\ge3$ and $n\ge4$.

\begin{enumerate}
\item If $5\mid m$, then the bounds from
Corollaries~\ref{cor:double-roman-multiples} and
\ref{cor:double-roman-mod5} coincide.

\item For $5\nmid m$, $3\le m\le19$, the comparison is as follows:

\begin{itemize}
\item The bound from Corollary~\ref{cor:double-roman-multiples} is smaller for
\begin{itemize}
\item $m\in\{3,6,7,12\}$;
\item $m=8$ and $4\le n\le11$, or $n\ge12$ and $n\equiv0,1\pmod5$;
\item $m=16$ and $4\le n\le6$;
\item $m=18$ and $n\in\{4,\ldots,11,13,16\}.$
\end{itemize}

\item the bounds coincide for
\begin{itemize}
\item $m\in\{4,9\}$;
\item $m=8$ and $n\ge12$ and $n\not\equiv0,1\pmod5$;
\item $m=18$ and $n\in\{12,14,15,19\}.$
\end{itemize}

\item The bound from Corollary~\ref{cor:double-roman-mod5} is smaller for
\begin{itemize}
\item $m=14$;
\item $m=16$ and $n\ge7$;
\item $m=18$ and $n\in\{17,18\}$ or $n\ge20.$
\end{itemize}
\end{itemize}

\item For $5\nmid m$, $m\ge19$ and for sufficiently large $n$ the bound
from Corollary~\ref{cor:double-roman-mod5} is strictly smaller than every
applicable bound from Corollary~\ref{cor:double-roman-multiples}.
\end{enumerate}
\end{theorem}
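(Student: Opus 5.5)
The theorem compares two explicit families of functions of $(m,n)$, so the proof is a direct case analysis with no new constructions. Write $B_{\mathrm{mult}}(m,n)$ for the minimum over all bounds of Corollary~\ref{cor:double-roman-multiples} applicable to $m$, that is, over all $r\in\{3,\dots,9\}$ with $r\mid m$ and $t=m/r$. Write $B_{5}(m,n)$ for the residue-class bound of Corollary~\ref{cor:double-roman-mod5}. The task is to determine the sign of $B_{\mathrm{mult}}-B_5$.

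Part (1) is immediate. If $m=5t$, Corollary~\ref{cor:double-roman-mod5} gives $\frac{m}{5}(3n+4)=t(3n+4)$, which is exactly the $r=5$ entry of Corollary~\ref{cor:double-roman-multiples}. Both come from Theorem~\ref{thm:mod5-final}, whose case $m\equiv 0$ was itself taken from Theorem~\ref{thm:linear_multiples_unified}. I will read ``coincide'' as referring to this $r=5$ entry. If $m$ has other divisors in $\{3,\dots,9\}$, such as $m=15$ or $m=30$, I will remark that the $r=5$ slope $3t$ per block is minimal by the slope comparison already carried out before Theorem~\ref{thm:mod5-final}.

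For part (2), I treat each $m\in\{3,4,6,7,8,9,11,12,13,14,16,17,18,19\}$ separately. For $m=11,13,17,19$ no $r\in\{3,\dots,9\}$ divides $m$, so only $B_5$ applies. For the remaining $m$, I list the applicable entries, for example $m=12$ ($r=3,4,6$), $m=16$ ($r=4,8$) and $m=18$ ($r=3,6,9$). I then write $B_5$ with $n-2=5q+s$, so that $\lfloor (n-2)/5\rfloor=q$ is explicit, and similarly handle the $\lceil 4n/3\rceil$ and parity cases. Each comparison becomes a difference of two piecewise-linear functions of $n$ on residue classes modulo $\mathrm{lcm}(2,3,5)$. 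Here are some sample checks.

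\begin{itemize}
\item For $m=4$: $3n$ versus $(3n+4)-4=3n$, which coincide.
\item For $m=9$: $2(3n+4)-4-2\lfloor (n-2)/5\rfloor$ versus $r=3$, $t=3$ giving $6n+6$, and $r=9$ giving $6n+4$. Here the claimed equality has to be checked against the best applicable entry; this and similar entries need careful rechecking.
\item For $m=16$: the $r=4$ entry $12n$ versus $4(3n+4)-9-8\lfloor (n-2)/5\rfloor$. The latter wins once $8\lfloor (n-2)/5\rfloor>7$, that is, for $n\ge 7$, which matches the statement.
\end{itemize}

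In each case the slopes in $n$ are computed first. Where the slopes are equal (e.g.\ $m=8$, $16$, $18$), the finitely many residue classes are checked individually. Where the slopes differ, the crossover $n$ is located and the smaller values of $n$ are enumerated. The main obstacle is the bookkeeping in the $m=8$ and $m=18$ cases, which involve the floor terms $\lfloor n/5\rfloor$, $\lfloor (n-2)/5\rfloor$ and the ceiling $\lceil 4n/3\rceil$. There, the claimed irregular sets such as $n\in\{4,\dots,11,13,16\}$ must be reproduced exactly. I would first tabulate both sides for $n\le 30$ and then prove periodicity of the difference beyond that range.

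For part (3), I compare slopes in $n$. $B_5$ has slope $\frac{3(m+s)}{5}-\frac{c_s}{5}$, where $s\in\{4,3,2,1\}$ is the residue correction and $c_s\in\{8,5,2,0\}$. Each applicable multiple-bound has slope $\frac{m}{r}\sigma_r$, with $\sigma_r$ the normalized slopes at $k=2$: $\sigma_3=2$, $\sigma_4=3$, $\sigma_6=4$, $\sigma_7=9/2$, $\sigma_8=28/5$, $\sigma_9=6$. Every ratio $\sigma_r/r$ exceeds $3/5$ by at least $1/15$, attained at $r=3$ and $r=9$. Hence $\frac{m}{r}\sigma_r-\frac{3m}{5}\ge m/15$, while the extra slope of $B_5$ is at most $\frac{12-8}{5}$ or $\frac{9-5}{5}$, i.e.\ at most $4/5$. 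The inequality $m/15>4/5$ gives strict inequality in slope for $m>12$, and hence $B_5<B_{\mathrm{mult}}$ for large $n$ whenever $m\ge 19$.
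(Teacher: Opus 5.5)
Your overall route is the paper's: part (1) by identifying the $r=5$ entry, part (2) by a finite case-by-case comparison, and part (3) by comparing slopes in $n$. However, the slope argument you give for part (3) rests on a false inequality.

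You claim that every $\sigma_r/r$ exceeds $3/5$ by at least $1/15$, with the minimum attained at $r=3,9$. But for $r=7$ one has $\sigma_7/7-3/5=9/14-3/5=3/70<1/15$. So the minimal excess is $3m/70$, coming from $r=7$. Your intermediate conclusion ``strict inequality in slope for $m>12$'' is therefore false. At $m=14\equiv 4\pmod 5$ both slopes equal $9$: $B_5=9n+8$ against $9n+10$ or $9n+11$ from $r=7$. This is exactly why $m=14$ appears in part (2) and is decided by the constant terms.

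The missing step is to compare the largest residue slope $(3m+4)/5$ with the smallest multiple slope $9m/14$, which is what the paper does. Then $(3m+4)/5<9m/14$ if and only if $m>56/3$, and this is where the threshold $m\ge 19$ in the statement comes from. Your argument cannot account for that threshold, and the fact that it produced $13$ instead should have been a warning sign. The conclusion for $m\ge 19$ survives, but only after this correction.

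Several of your sample computations for part (2) are also off.
\begin{itemize}
\item Since $9\equiv 4\pmod 5$, $B_5(9,n)=2(3n+4)-4=6n+4$, with no floor term; you used the $m\equiv 3$ formula. This equals the $r=9$ entry $6n+4$, so the claimed coincidence holds. With your formula you would have ``refuted'' it for $n\ge 7$.
\item For $m=16$ and $m=18$ the slopes are not equal. The floor terms contribute $-8/5$ and $-2/5$ to the slope of $B_5$, giving $52/5$ (against $12$ and $56/5$) and $58/5$ (against $12$). That is why the residue bound eventually wins there, and a periodicity argument for the difference does not apply to these cases.
\item For $m=18$ the clean comparison is $12n+12-2\lfloor (n-2)/5\rfloor$ against the best multiple entry. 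That entry is $12n+6$ (from $r=6$) when $n\equiv 1\pmod 3$, and $12n+8$ (from $r=9$) otherwise. This yields the irregular sets in the statement directly.
\end{itemize}

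Finally, drop the remark in part (1) that the $r=5$ entry is minimal among the applicable entries; it holds only asymptotically. For $m=30$, $n=4$ the $r=6$ entry gives $90<96$. For $m=20$, $n\in\{4,5\}$ the $r=4$ entry gives $15n<12n+16$. The weak reading of ``coincide'' (the $r=5$ entry) is the one the paper actually proves.
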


\begin{proof}
We compare the dominant linear coefficients in $n$ appearing in
Corollaries~\ref{cor:double-roman-multiples} and
\ref{cor:double-roman-mod5}.

For Corollary~\ref{cor:double-roman-multiples}, admissible decompositions
$m=rt$ with $r\in\{3,\ldots,9\}$ yield asymptotic slopes reduced to the set
\[
\left\{
\frac{3m}{5},\;
\frac{9m}{14},\;
\frac{2m}{3},\;
\frac{3m}{4}
\right\}.
\]

For Corollary~\ref{cor:double-roman-mod5}
the asymptotic slopes are from the set
\[
\left\{
\frac{3m}{5},
\frac{3m+4}{5},
\frac{3m+3}{5}
\right\}.
\]

If $5\mid m$, both constructions yield the same bound. Therefore, assume that $5\nmid m$.
Among the slopes arising from Corollary~\ref{cor:double-roman-mod5},
the largest one equals
\[
\frac{3m+4}{5},
\]
while among the admissible slopes from
Corollary~\ref{cor:double-roman-multiples} the smallest one equals
\[
\frac{9m}{14}.
\]
A direct comparison shows that
\[
\frac{3m+4}{5}<\frac{9m}{14}
\]
if and only if
\[
42m+56<45m,
\]
that is, if and only if $m>\frac{56}{3}$.
Hence for every $m\ge19$ with $5\nmid m$, the residue-class construction
has strictly smaller slope than any construction from
Corollary~\ref{cor:double-roman-multiples}. Consequently,
Corollary~\ref{cor:double-roman-mod5} yields the better bound for all
sufficiently large $n$ in this range.

It remains to compare the finitely many cases $3\le m\le18$ directly.
For each admissible value of $m$ in this range, we evaluated the explicit
expressions from Corollaries~\ref{cor:double-roman-multiples} and
\ref{cor:double-roman-mod5} and compared the resulting bounds as
functions of $n$ for all $n\ge4$.

More precisely, for every fixed $m$ we first determined all applicable
decompositions $m=rt$ with $r\in\{3,\dots,9\}$ and selected the smallest
corresponding bound from Corollary~\ref{cor:double-roman-multiples}.
We then compared this bound with the residue-class bound from
Corollary~\ref{cor:double-roman-mod5}.
Since both expressions are linear functions of $n$ up to periodic floor
terms, their difference can be analyzed explicitly. In particular, by
comparing the linear coefficients in $n$ we determined the asymptotic
ordering of the bounds, and for the remaining cases we computed the exact
values of the difference for $n\ge4$ in order to identify the transition
points where the ordering changes.

We illustrate the above comparison procedure in the case $m=16$.
The admissible decompositions
$16=4\cdot4=8\cdot2$ yield from
Corollary~\ref{cor:double-roman-multiples} the bound
\[
\min\!\left\{
12n,\;
12n
-
2\left(
\left\lfloor\frac{n-2}{5}\right\rfloor
+
\left\lfloor\frac{n}{5}\right\rfloor
\right)
+8
\right\}.
\]
Since $16\equiv1\pmod5$, Corollary~\ref{cor:double-roman-mod5} gives
\[
12n+7-8\left\lfloor\frac{n-2}{5}\right\rfloor.
\]
Comparing the two expressions for $n\ge4$ shows that the
multiple-based bound is smaller for $4\le n\le6$, while the residue-class
bound is smaller for $n\ge7$.
\end{proof}


\smallskip
\noindent
Theorem~\ref{thm:double-roman-asymptotic-comparison} shows that the
multiple-based constructions remain optimal only for a small number of
exceptional circumferences, while the
residue-class construction provides the strongest general upper bounds
for all sufficiently large $m$ and $n$. Together, the two approaches therefore
yield effective estimates for $\gamma_{[2]R}(C_m\Box P_n)$ for all
cylindrical grids.


\smallskip
\textbf{\Large{Conclusion}}
\smallskip

In this paper, we studied $[k]$-Roman domination on cylindrical grids
$C_m\Box P_n$ and developed several constructive upper bounds based on
three different approaches: linear periodic constructions, uniform
ceiling-type assignments, and packing-based refinements.
For the special case $C_9\Box P_n$, we compared these three families in
detail and showed that their relative performance depends essentially on
the parameter $k$.
For small values of $k$, the linear constructions provide the best
bounds, whereas for larger values of $k$ the uniform and packing-based
constructions become asymptotically superior.

We then extended the linear constructions to all cylindrical grids whose
circumference is a multiple of one of the values $3,\dots,9$, obtaining
a unified family of explicit upper bounds.
Motivated by the fact that blocks of length five give the smallest
asymptotic slope for $k\ge2$, we further derived a general residue-class
construction depending on $m\bmod 5$.
This yielded upper bounds for $\gamma_{[k]R}(C_m\Box P_n)$ for all
cylindrical grids and, in particular, explicit consequences for the
double Roman domination number.

For $k=2$, we showed that the linear constructions provide the strongest
bounds among the three general families considered here.
This allowed us to compare in detail the multiple-based bounds and the
modulo-$5$ construction.
The comparison shows that the residue-class construction is asymptotically
best for all sufficiently large admissible circumferences, while the
multiple-based constructions remain optimal only in several exceptional
small cases.

There are several natural directions for future work.
First, the general bounds in Theorem~\ref{thm:mod5-final} are formulated
only in terms of the residue class of $m$ modulo $5$.
Our constructions and computations indicate that these bounds could be
further improved by refining the analysis according to the residue class
of $n$ modulo $5$, since the terminal corrections at the ends of the path
depend sensitively on this parameter.
A more detailed case-by-case treatment with respect to $n \pmod 5$ may
therefore lead to sharper upper bounds.

Second, the comparison carried out for the double Roman domination number
suggests that several small circumferences still deserve special
attention.
In particular, the graphs $C_{11}\Box P_n$, $C_{13}\Box P_n$, and
$C_{17}\Box P_n$ do not admit any comparison with the multiple-based
family coming from circumferences $3,\dots,9$, and thus it is not clear
whether the modulo-$5$ construction is genuinely optimal for these
graphs.
It would be especially interesting to develop constructions tailored
specifically to these missing cases and compare them with the general
residue-class bounds.

Finally, another challenging direction is to determine exact values, or
at least asymptotically tight formulas, for $\gamma_{[k]R}(C_m\Box P_n)$
for fixed small values of $k$.
The periodic structure of cylindrical grids suggests that sharper
characterizations may be possible, especially for double and triple Roman
domination.
\smallskip

 \begin{center}{\textbf{\Large{Funding}}}
 \end{center}

The first author (S.B.) acknowledges the financial support from the Slovenian Research and Innovation Agency (ARIS)
through research programme No.\ P1-0297 and research project No.\ J1-70016.
The second author (J.Z.) was partially supported by ARIS through the annual work program of Rudolfovo
and by the research grants P2-0248 and L1-60136.
This work was also supported in part by the Horizon Europe project Quantum Excellence Centre for Quantum-Enhanced Applications (QEC4QEA).
\smallskip

\bibliographystyle{cas-model2-names}
\bibliography{axioms}

@article{Kragujevac,
    author={ P. Pavli\v{c} and J.  \v{Z}erovnik},
	title={A note on the domination number 
	of the {C}artesian products of paths and cycles},
	journal={ Kragujevac Journal of Mathematics},
	volume={37},
	number= {2 }, 
	year={2013}, 
	pages={ 275-–285}
}

@article{BrezovnikZerovnik2026_1,
  author        = {S. Brezovnik and J. {\v{Z}}erovnik},
  title         = {{[k]}-{{Roman}} Domination on Cylindrical Grids {$C_m \square P_n$}},
  year          = {2026},
  eprint        = {2603.02831},
  archivePrefix = {arXiv},
	journal={ submitted},
  primaryClass  = {math.CO}
}

@article{Khalili2023,
  author    = {N. Khalili and J. Amjadi and M. Chellali and S. M. Sheikholeslami},
  title     = {On {[}k{]}-{{{Roman}}} domination in graphs},
  journal   = {AKCE International Journal of Graphs and Combinatorics},
  volume    = {20},
  number    = {3},
  pages     = {291--299},
  year      = {2023},
  publisher = {Taylor and Francis},
  doi       = {10.1080/09728600.2023.2241531},
}

@article{Ahangar2021,
  author    = {J. C. Valenzuela-Tripodoro and M. A. Mateos-Camacho and M. Cera and others},
  title     = {The {[}k{]}-multiple {{{Roman}}} domination in graphs},
  journal   = {Research Square},
  note      = {Preprint, Version 1},
  year      = {2023},
  doi       = {10.21203/rs.3.rs-2889100/v1},
  url       = {https://doi.org/10.21203/rs.3.rs-2889100/v1}
}

@article{Alanko,
    author={S. Alanko and  S. Crevals and  A. Isopoussu and  P. Ostergard and V. Pettersson},
	title={Computing the domination number of grid graphs},
	journal={ The Electronic Journal of Combinatorics}, 
	volume={18}, 
	year={2011)}
}

@article{Chang1992,
    author={T. Y. Chang},
	title={Domination Numbers of Grid Graphs},
	journal={Ph.D. thesis, Dept. of Mathematics, University of South Florida}, 
	year={1992}
}

@article{ABDOLLAHZADEHAHANGAR2021125444,
title = {Triple {{{Roman}}} domination in graphs},
journal = {Applied Mathematics and Computation},
volume = {391},
pages = {125444},
year = {2021},
issn = {0096-3003},
doi = {https://doi.org/10.1016/j.amc.2020.125444},
url = {https://www.sciencedirect.com/science/article/pii/S0096300320304057},
author = {H. {Abdollahzadeh Ahangar} and M.P. Álvarez and M. Chellali and S.M. Sheikholeslami and J.C. Valenzuela-Tripodoro}
}

@book{GareyJohnson,
author = {Garey, Michael R. and Johnson, David S.},
title = {Computers and Intractability: A Guide to the Theory of NP-Completeness},
year = {1979},
isbn = {0716710447},
publisher = {W. H. Freeman \& Co.},
address = {USA}
}

@Book{Haynes,
author="Haynes, T.W. and  Hedetniemi, S. and  Slater, P.",
year="1998",
title="Fundamentals of Domination in Graphs",
edition="1st ed.",
publisher="CRC Press",
doi="https://doi.org/10.1201/9781482246582"
}

@book{ImrichKlavzar,
author = "W. Imrich and S. Klavzar",
title = "Product Graphs: Structure and Recognition",
year = "2000",
publisher = "Wiley-Interscience",
address = "New York",
}

@article{BrezovnikZerovnikkRomanCylindrical2,
  author        = {S. Brezovnik and J. {\v{Z}}erovnik},
  title         = {Further results on {[k]}-{Roman} domination on cylindrical grids {$C_m \square P_n$}},
  year          = {2026},
  eprint        = {2603.25191},
  archivePrefix = {arXiv},
	journal={ submitted},
  primaryClass  = {math.CO}
}

@incollection{Chellali2021VarietiesI,
  author    = {Chellali, Mustapha and Jafari Rad, Nasrin and Sheikholeslami, Seyed Mahmoud and Volkmann, Lutz},
  title     = {Varieties of {{{{Roman}}}} Domination},
  booktitle = {Structures of Domination in Graphs},
  editor    = {Haynes, Teresa W. and Hedetniemi, Stephen T. and Henning, Michael A.},
  series    = {Developments in Mathematics},
  volume    = {66},
  pages     = {273--307},
  publisher = {Springer},
  address   = {Cham},
  year      = {2021},
  doi       = {10.1007/978-3-030-58892-2_10}
}

@article{Chellali2020VarietiesII,
  author  = {Chellali, Mustapha and Jafari Rad, Nasrin and Sheikholeslami, Seyed Mahmoud and Volkmann, Lutz},
  title   = {Varieties of {{{{Roman}}}} domination {II}},
  journal = {AKCE International Journal of Graphs and Combinatorics},
  volume  = {17},
  number  = {3},
  pages   = {966--984},
  year    = {2020},
  doi       = {https://doi.org/10.1016/j.akcej.2019.12.001}
}

@ARTICLE{Cockayne2004,
  author = {E. J. Cockayne and P. A. Dreyer and S. M. Hedetniemi and S. T. Hedetniemi},
  title = {{{{Roman}}} domination in graphs},
  journal = {Discrete Mathematics},
  volume = {278},
  year = {2004},
  pages = {11--22},
  doi = {10.1016/j.disc.2003.06.002}
}

@article{Beeler2016,
title = {Double {{{Roman}}} domination},
journal = {Discrete Applied Mathematics},
volume = {211},
pages = {23-29},
year = {2016},
issn = {0166-218X},
doi = {https://doi.org/10.1016/j.dam.2016.03.017},
url = {https://www.sciencedirect.com/science/article/pii/S0166218X1630155X},
author = {Robert A. Beeler and Teresa W. Haynes and Stephen T. Hedetniemi}
}

\bigskip
\bigskip
\bigskip
\bigskip
\bigskip
\bigskip
\bigskip
\bigskip
\bigskip
\bigskip
\bigskip
\bigskip
\bigskip
\bigskip
\bigskip
\bigskip
\bigskip
\bigskip
\bigskip
\bigskip
\bigskip
\bigskip
\bigskip
\bigskip
\bigskip
\bigskip
\bigskip
\bigskip
\bigskip
\bigskip
\bigskip
\bigskip
\bigskip
\bigskip
\bigskip
\bigskip
\bigskip
\bigskip
\bigskip
\bigskip
\bigskip
\bigskip
\bigskip
\bigskip
\bigskip
\bigskip
\bigskip
\bigskip

\textbf{\Large{{Appendix A}}}
\smallskip

\includepdf[
  pages=1,
  scale=0.85,
]{constructions.pdf}

\includepdf[
  pages=2-,
  scale=0.85,
  pagecommand={}
]{constructions.pdf}

\end{document}